\newtheorem{theorem}{Theorem}[section]
\newtheorem*{theorem*}{Theorem}
\newtheorem{lemma}[theorem]{Lemma}
\newtheorem{conjecture}[theorem]{Conjecture}
\newtheorem{corollary}[theorem]{Corollary}
\theoremstyle{definition}
\newtheorem{definition}[theorem]{Definition}
\theoremstyle{remark}
\newtheorem{remark}[theorem]{Remark}
\newtheorem{problem}[theorem]{Problem}
\numberwithin{equation}{section}
\DeclareMathOperator{\dist}{dist}
\DeclareMathOperator{\conv}{conv}
\newcommand{\const}{{\rm const}}
\renewcommand{\epsilon}{\varepsilon}
\renewcommand{\phi}{\varphi}
\renewcommand{\kappa}{\varkappa}
\newcommand*{\E}{\mathbb{E}}    
\newcommand*{\R}{\mathbb{R}}    
\newcommand*{\C}{\mathbb{C}}    
\renewcommand*{\S}{\mathbb{S}}   
\newcommand*{\Z}{\mathbb{Z}}    
\newcommand*{\T}{T}             
\newcommand*{\SO}{\mathrm{SO}}  
\begin{document}

\title{Suborbits in Knaster's problem}

\author{Boris~Bukh$^{*}$}
\email{bbukh@math.cmu.edu}
\address{Boris Bukh, Dept. of Math. Sciences, Wean Hall 6113, Carnegie Mellon University, Pittsburgh, PA 15213, USA}
\thanks{{$^{*}$} Work was partially done while the author was supported by the University of Cambridge, and Churchill College.}

\author{Roman~Karasev$^{**}$}
\email{r\_n\_karasev@mail.ru}
\address{Roman Karasev, Dept. of Mathematics, Moscow Institute of Physics and Technology, Institutskiy per. 9, Dolgoprudny, Russia 141700}
\address{Roman Karasev, Institute for Information Transmission Problems RAS, Bolshoy Karetny per. 19, Moscow, Russia 127994}
\address{Roman Karasev, Laboratory of Discrete and Computational Geometry, Yaroslavl' State University, Sovetskaya st. 14, Yaroslavl', Russia 150000}

\thanks{{$^{**}$} Supported by the Dynasty Foundation, the President's of Russian Federation grant MD-352.2012.1, and the Russian government project 11.G34.31.0053.}

\subjclass[2010]{46B07, 05D10, 52C99}
\keywords{Knaster's problem, Euclidean Ramsey theory}

\begin{abstract}
In this paper we exhibit a similarity between Euclidean Ramsey problems and Knaster-type problems. By borrowing ideas from Ramsey theory we prove weak Knaster properties of non-equatorial triangles in spheres, and of simplices in Euclidean spaces.
\end{abstract}

\maketitle

\section{Introduction}
The Borsuk--Ulam theorem asserts that for every continuous map $f\colon \S^d\to\R^d$
there is a pair of antipodal points $x,-x$ such that $f(x)=f(-x)$. Hopf~\cite{hopf_orig}
extended this, he showed that for any two points $x,y\in \S^d$ and any
continuous $f\colon \S^d\to \R^d$ there is a rotation $\sigma\in \SO(d+1)$
of the sphere such that $f(\sigma x)=f(\sigma y)$. Motivated by these
results, Knaster~\cite{knaster_orig} asked whether for every $\ell$-tuple
of points $X=\{x_1,\dotsc,x_\ell\}$ and every function $f\colon \S^{d+\ell-2}\to\R^d$
there is a set $Y=\{y_1,\dotsc,y_\ell\}$ that is isometric to $X$, and such that
$f(y_1)=\dotsb=f(y_\ell)$. The conjecture was shown to be false for $d\geq 2$
from the dimension considerations~\cite{makeev_counterexample,babenko_bogatyi,chen_counterexamples}, while the case of $d=1$ remained unsolved until a 
counterexample was found in~\cite{kashin_szarek} (see also~\cite[Miniature~32]{matousek_miniatures} for a simple exposition).
This suggests the following conjecture:
\begin{conjecture}[Weak Knaster conjecture]
There exists $n=n(\ell, d)$ such that for any $\ell$ points $X=\{x_1,\ldots,x_\ell\}$ on the unit sphere $\S^{n-1}$ and any continuous 
map $f\colon \S^{n-1}\to \R^d$ there exists a rotation $\rho\in \SO(n)$ such that
\[
f(\rho x_1) = f(\rho x_2) = \dots = f(\rho x_\ell).
\]
\end{conjecture}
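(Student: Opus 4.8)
The plan is to turn the conjecture into the search for a zero of an equivariant test map and then to force that zero by a Borsuk--Ulam-type index argument, bootstrapping from Hopf's theorem as the case $\ell=2$. Encoding a rotation $\rho\in\SO(n)$ by the values it produces, pass to the difference map
\[
\bar F\colon \SO(n)\to (\R^d)^{\ell-1},\qquad \bar F(\rho)=\bigl(f(\rho x_1)-f(\rho x_2),\dots,f(\rho x_{\ell-1})-f(\rho x_\ell)\bigr),
\]
so that a configuration as required by the conjecture is exactly a zero of $\bar F$. Hopf's theorem is the statement that $\bar F$ must vanish when $\ell=2$ and $n=d+1$; its mechanism is the isometry of $\S^{n-1}$ swapping $x_1$ and $x_2$, which makes nonvanishing of $\bar F$ incompatible with Borsuk--Ulam. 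The entire task is to manufacture, for general $\ell$, enough symmetry of this sort while keeping the ambient dimension bounded by a function of $\ell$ and $d$ alone.

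The second step is to prove the conjecture for configurations that already carry a large symmetry group realized by rotations, since these serve both as examples and as building blocks. For the vertices of a regular simplex the full symmetric group $S_\ell$ acts, and I would make $\bar F$ equivariant for this action into the reduced permutation representation, which has no nonzero fixed vectors; a nonzero $\bar F$ would then yield an $S_\ell$-equivariant map into a fixed-point-free representation, and I would rule this out by a Fadell--Husseini index or Stiefel--Whitney class computation once $\dim\SO(n)$ is large relative to $(\ell-1)d$. The model to iterate is the case $\ell=2^k$: the vertices of a rectangular box carry a $(\Z/2)^k$-action by reflections, and one can equalize $f$ on all of them by applying the Hopf mechanism coordinate by coordinate, the index calculation for $(\Z/2)^k$ giving a bound on $n$ depending only on $k$ and $d$, hence only on $\ell$ and $d$.

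The third and decisive step is to reduce an arbitrary $\ell$-point configuration $X$ to the symmetric case. The idea suggested by Euclidean Ramsey theory is to embed $X$ as a \emph{suborbit}: to find a highly symmetric configuration $\tilde X$ on a higher-dimensional sphere whose symmetry group $G$ forces equalization of $f$ on all of $\tilde X$ by the index argument above, and which contains an isometric copy of $X$ among its points, so that equalization on $\tilde X$ descends to equalization on $X$. A direct discretization of $f$ into a finite coloring followed by the classical ``bricks are Ramsey'' theorem would also produce an approximately monochromatic copy of $X$, but the number of colors, and hence the dimension, would blow up as the approximation is refined; replacing this combinatorial amplification by the continuous $(\Z/2)^k$-iteration above is exactly what keeps $n$ bounded in terms of $\ell$ and $d$.

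The main obstacle is the geometric realizability in this last step. For the index computation to bite, $\tilde X$ must be one of the few orbits with a computable nonzero index --- essentially a box (orbit of $(\Z/2)^k$) or a regular simplex (orbit of $S_\ell$) --- but the pairwise distances occurring in such orbits form a rigid, highly structured spectrum, whereas the Gram matrix of a general $X$ is arbitrary. Squeezing an arbitrary configuration isometrically into a symmetric super-configuration of this kind, while keeping the dimension a function of $\ell$ and $d$ only, is precisely the point at which the argument breaks down in general; it is this gap that confines the unconditional results to simplices, which are already symmetric, and to non-equatorial triangles, where an auxiliary trick supplies the missing symmetry, and that leaves the conjecture open for arbitrary $X$.
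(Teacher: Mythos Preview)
The statement you are addressing is a \emph{conjecture}, and the paper does not prove it; it is stated as open and the rest of the paper establishes only partial cases. Your proposal is likewise not a proof --- you yourself conclude that the argument breaks at the suborbit-realization step and that the conjecture remains open --- so there is nothing to verify for correctness in the usual sense.

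As an outline of the paper's strategy your account is broadly right but blurs a few points that matter. First, the groups the paper actually works with are $p$-tori $(\Z_p)^\alpha$, not $S_\ell$ or just $(\Z/2)^k$: the reason is that for $p$-tori Volovikov's Borsuk--Ulam theorem gives an \emph{explicit} bound $n\ge d(|G|-1)+k$ (Theorem~\ref{knaster-orbits}), whereas for $S_\ell$ or for general nonabelian $p$-groups no such explicit bound is available. Second, your phrase ``simplices, which are already symmetric'' is misleading: an arbitrary affinely independent set has no nontrivial symmetry, and the paper handles simplices by proving, via the Frankl--R\"odl construction, that every non-degenerate simplex is a \emph{Euclidean} sub-$p$-toral set and then invoking Theorem~\ref{knaster-orbits}; this yields only the Euclidean Knaster statement (Theorem~\ref{eu-weak-knaster}), not the spherical one. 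Third, the obstruction at the final step is sharper than a mere ``rigid distance spectrum'' mismatch with boxes and regular simplices: the paper proves (Theorem~\ref{thm_central_suborbits}) that a generic three-point subset of $\S^1$ is not a spherical suborbit of \emph{any} finite group whatsoever, so no enlargement of the class of symmetric super-configurations can rescue this approach for equatorial triangles.
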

The only settled cases of the conjecture are $n(2,d)=d+1$ (Hopf's theorem), and
$n(3,1)=3$ which is a result of Floyd~\cite{floyd}, but neither $n(4,1)$ nor $n(3,2)$ 
are known to be finite. 

An analogous phenomenon has been investigated in Euclidean Ramsey theory. A finite point set $X$ is \emph{Ramsey} if for every integer $t$ there is a $D=D(X,t)$ such that every 
$t$-coloring $\chi\colon \R^D\to [t]$ contains a rigid copy of $X$ on which $f$ is constant (or expressed in colorful language, a monochromatic copy of $X$). It is open which sets are Ramsey, but several classes of sets are known to be Ramsey: triangles~\cite{frankl_rodl_triangles_orig}, 
regular $m$-gons~\cite{kriz_permutation_groups}, non-degenerate simplices~\cite{frankl_rodl_simplices}, 
trapezoids~\cite{kriz_trapezoids}, and others. The most general result is that of K\v{r}\'{\i}\v{z}~\cite{kriz_permutation_groups} 
which asserts that any set $X$ that embeds into a finite set $Y$ with a 
solvable transitive symmetry group is Ramsey. In fact, the result of K\v{r}\'{\i}\v{z}
is slightly more general.
In a recent work of Leader--Russell--Walters~\cite{lrw_transitive} a very appealing conjecture 
is put forward, which would imply the same for all transitive symmetry groups, solvable or not.

Motivated by these results, we use the idea of embedding a set into an orbit of a group to
establish several positive Knaster-type results. A new obstacle compared to 
Euclidean Ramsey theory is that the only groups that we are currently able to handle are $p$-tori.
A \emph{$p$-torus} is a finite group isomorphic to $\Z_p^\alpha$ whose action on $\S^{n-1}$ is induced by a linear orthogonal action on the ambient $\R^n$. Our main topological tool is the following result:

\begin{theorem}
\label{knaster-orbits}
Suppose $X\subset \S^{k-1}$ is an orbit of an orthogonal  action of a $p$-torus, which is a group $G=(\Z_p)^\alpha$. Then for $n\ge d(|G|-1) + k$ and for every continuous map $f\colon \S^{n-1}\to\R^d$ there is an isometric copy of $X$ on which $f$ is constant.
\end{theorem}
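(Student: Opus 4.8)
The plan is to set up an equivariant topological obstruction and appeal to a cohomological index argument (in the spirit of the Borsuk--Ulam/Knaster circle). Let me think through the structure carefully.

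We have $X \subset \S^{k-1}$ an orbit of $G = (\Z_p)^\alpha$ acting orthogonally on $\R^k$, with $|X| = |G| = p^\alpha$ (assuming the action is free on $X$; if the stabilizer is nontrivial the orbit is smaller and we'd want $X$ to literally be $G$, i.e. the regular orbit — actually the statement says "an orbit", so $|X|$ divides $|G|$, but the bound is in terms of $|G|$, so the free case is the hardest and WLOG $|X| = |G|$).

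The key construction: the configuration space of isometric copies of $X$ in $\S^{n-1}$. An isometric copy of $X$ is determined by an orthogonal embedding $\R^k \hookrightarrow \R^n$, i.e. by a point in the Stiefel manifold $V_k(\R^n)$, modulo the isometries of $X$ itself. Since $X$ is a $G$-orbit, $G$ acts on $X$, hence on the copies. So the relevant space is roughly $V_k(\R^n)$ with its $G$-action (via the orthogonal action of $G$ on $\R^k$, acting on the $k$-frame). A copy of $X$ on which $f$ is constant is a fixed point of the map
\[
F\colon V_k(\R^n) \to (\R^d)^X, \qquad F(e) = \bigl(f(e \cdot x)\bigr)_{x \in X},
\]
landing in the "diagonal" — more precisely, $F$ is $G$-equivariant where $G$ permutes the $X$-coordinates of $(\R^d)^X$, and we want $e$ with $F(e)$ in the diagonal $\Delta \cong \R^d$. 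Composing with the projection $(\R^d)^X \to W$, where $W = (\R^d)^X / \Delta$ is the reduced (standard) representation tensored with $\R^d$, we get a $G$-map $\bar F\colon V_k(\R^n) \to W$ and must show $\bar F$ has a zero. Note $\dim_\R W = d(|G|-1)$.

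So the whole theorem reduces to the following: \emph{the Stiefel manifold $V_k(\R^n)$ with the given $G$-action admits no $G$-map to $W \setminus \{0\}$}, i.e. the $G$-equivariant "category"/index of $V_k(\R^n)$ is large enough to force a zero of any $G$-map into $W$ of real dimension $d(|G|-1)$. The main step is thus a $\Z_p$-coefficient equivariant cohomology (Fadell--Husseini index, or ideal-valued index) computation: one shows that the index of $V_k(\R^n)$ (with this action) dominates the Euler class of the bundle associated to $W$. The numerology $n \ge d(|G|-1) + k$ should be exactly what makes the relevant characteristic class in $H^*(BG;\Z_p)$ nonzero — roughly, $V_k(\R^n)$ is highly connected through the action (the action on the ambient $\R^n = (\R^k)^{\oplus ?}\oplus\cdots$ built from copies of the $G$-representation on $\R^k$), while $W$ contributes an Euler class of degree $d(|G|-1)$, and $n - k$ measures how much "room" we have to embed copies equivariantly. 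I expect one sets up $\R^n$ as containing enough copies of the regular representation (or of $\R^k$) so that $V_k$ is sufficiently highly $G$-connected.

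The hard part will be the equivariant index computation — verifying that the pertinent power of the top Chern/Euler class in $H^*(B\Z_p^\alpha;\Z_p)$ survives, i.e. that it is not killed in $H^*_G(V_k(\R^n);\Z_p)$. This is where the precise bound $n \ge d(|G|-1)+k$ must be used: we need $V_k(\R^n)$, as a $G$-space, to be "as good as" a sphere of dimension at least $d(|G|-1)-1$ from the point of view of the index. A clean way to organize this is to first reduce to the case where $X$ is a full regular orbit ($|X| = |G|$), then exhibit inside $\R^n$ a $G$-equivariant copy of $\R^k \oplus (\text{a multiple of the reduced regular representation})$, and finally run a Mayer--Vietoris or deleted-join style argument (à la the standard proof of Borsuk--Ulam via $\Z_p$-index) to conclude that no equivariant map to $W\setminus\{0\}$ exists. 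The remaining pieces — that $F$ is genuinely $G$-equivariant and continuous, and that a zero of $\bar F$ really yields a copy of $X$ with $f$ constant — are routine.
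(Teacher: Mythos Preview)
Your framework is exactly the paper's: configuration space $V_{n,k}$ with the $G$-action induced from the orthogonal $G$-action on $\R^k$ (by right composition on frames), equivariant test map into $(\R^d)^G$, and the goal of forcing a hit on the diagonal. Where you go astray is in the last step. The speculation about ``exhibit[ing] inside $\R^n$ a $G$-equivariant copy of $\R^k\oplus(\text{a multiple of the reduced regular representation})$'' is a red herring: there is no $G$-action on $\R^n$ anywhere in this argument, and none is needed. The $G$-action lives only on $\R^k$ and hence on $V_{n,k}$.

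The single ingredient you are missing, and which replaces the ``hard'' Fadell--Husseini computation you anticipate, is the elementary fact that $V_{n,k}$ is $(n-k-1)$-connected (seen from its description as an iterated sphere bundle with all fibers of dimension $\ge n-k$). Given this, the paper simply invokes Volovikov's Borsuk--Ulam theorem for $p$-tori: if $G=(\Z_p)^\alpha$ acts fixed-point-freely on an $(m-1)$-connected space, then every $G$-map from it to a fixed-point-free $G$-representation of real dimension at most $m$ must have a zero. Here $m=n-k$ and $\dim W = d(|G|-1)$, so the hypothesis $n-k\ge d(|G|-1)$ is exactly the bound in the theorem. If you prefer the index language, the connectivity gives $\mathrm{Ind}_G(V_{n,k})\subset H^{\ge n-k}(BG;\Z_p)$ immediately, and the Euler class of $W$ sits in degree $d(|G|-1)\le n-k$; no further computation is required. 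Either way, connectivity of the Stiefel manifold is the missing key, not any special $G$-structure on the ambient $\R^n$.
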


We remark that an analogous result is true for $p$-tori replaced by general finite $p$-groups (see~\cite[Lemma 1]{dolnikov_karasev}), but without any explicit  bound on $n$. This extra strength does not appear to be useful in the results that follow.

A \emph{suborbit} of a group action is a subset of an orbit. In the view of Theorem~\ref{knaster-orbits}, to approach the weak Knaster conjecture we would 
want to know which sets are suborbits of $p$-tori.

\begin{definition}
A set $X\subset \S^{k-1}$ is called a \emph{spherical suborbit of a group $G$}, if $X$ is contained in an orbit of $G$ with respect to some orthogonal action of $G$ on a sphere of possibly larger dimension $\S^{n-1}$ containing an isometric copy of $\S^{k-1}$ in the standard way. If $G=\Z_p^\alpha$ is a $p$-torus, then we simply say that $X$ is \emph{spherical sub-$p$-toral}.
\end{definition}

One can weaken Knaster's conjecture even further by considering the maps not from $\S^{n-1}$, but from all of $\R^n$. This case is more analogous to the problems studied in Euclidean Ramsey theory (but see~\cite{graham_sphere} and~\cite{matousek_rodl_sphere}). In this setting we can consider more general suborbits:

\begin{definition}
A set $X\subset \R^k$ is called a \emph{Euclidean suborbit of a group $G$}, if $X$ is contained in an orbit of $G$ with respect to some action of $G$ on $\R^n$ by isometries for some $n\geq k$. If $G=\Z_p^\alpha$ is a $p$-torus, then we simply say that $X$ is \emph{Euclidean sub-$p$-toral}.
\end{definition}

Unfortunately, not every set is a suborbit. Leader--Russell--Walters~\cite{lrw_quadrilateral} showed the following negative result (see also~\cite{eberhard_hdsphere} for a higher-dimensional generalization):
\begin{theorem*}[Leader--Russell--Walters]
There exists a four-point set on $\S^1$ that is not a Euclidean suborbit of any finite group action.
\end{theorem*}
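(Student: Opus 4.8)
The plan is to reduce the suborbit condition to a solvability question about spherical functions on finite groups, and then to produce a cyclic quadrilateral for which it fails. First I would normalize: any finite group acting on $\R^n$ by isometries fixes the centroid of every orbit, so after translating we may assume the action is orthogonal and fixes the origin. If the four points $p_1,\dots,p_4$ lie in one orbit $G\cdot v_1$, then they share a common norm $R$, and every pairwise inner product is a value of the normalized spherical function $\chi(g)=\langle v_1, g v_1\rangle/\|v_1\|^2$; writing $g_i v_1 = p_i$ we have $\langle p_i,p_j\rangle = R^2\,\chi(g_i^{-1}g_j)$. Since the four points lie on $\S^1$ they are coplanar, and an isometric embedding keeps them on a circle of some radius $\rho$ whose center $O$ is the foot of the perpendicular dropped from the orbit centroid. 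With $e$ the unit normal and centroid $O+h e$, a short computation gives
\[
\langle p_i,p_j\rangle=\rho^2\cos\beta_{ij}+h^2,\qquad R^2=\rho^2+h^2,
\]
where $\beta_{ij}$ is the angular gap between $p_i$ and $p_j$ on the circle. Setting $t=\rho^2/R^2\in(0,1]$, the suborbit condition becomes: \emph{there exist a finite group $G$, elements $g_1,\dots,g_4$, a spherical function $\chi$, and a parameter $t\in(0,1]$ with}
\[
\chi(g_i^{-1}g_j)=t\cos\beta_{ij}+(1-t)\qquad(1\le i<j\le 4).
\]
The free parameter $t$, which the embedder may choose, records the freedom in seating the circle inside the higher-dimensional orbit sphere.

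Second, I would exploit the rigidity of the group side. A spherical function is a matrix coefficient of an orthogonal representation, so its values lie in the real span of entries of finite-order orthogonal matrices, hence of algebraic numbers in real cyclotomic fields; moreover there are only countably many finite groups, each with finitely many $4$-tuples of elements. Thus for each pair $(G,(g_1,\dots,g_4))$ the attainable six-tuples $\bigl(\chi(g_i^{-1}g_j)\bigr)$ form a semialgebraic subset of $\R^6$, defined over the field of real algebraic numbers and depending polynomially on finitely many free spectral parameters. Intersecting with the rank-at-most-three ``concyclic'' locus cut out by the normalized Gram entries $M_{ij}=t\cos\beta_{ij}+(1-t)$, and projecting to the three-dimensional space $\mathcal S$ of cyclic-quadrilateral \emph{shapes}, yields for each group a semialgebraic set $\Sigma_G\subset\mathcal S$ of shapes realizable as suborbits of $G$. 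The aim is to prove that every $\Sigma_G$ is nowhere dense; then $\bigcup_G\Sigma_G$ is meager, and by the Baire category theorem a comeager set of shapes survives, each of which is a four-point subset of $\S^1$ that is not a Euclidean suborbit of any finite group.

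The hard part is the uniform nowhere-density estimate. For small groups this is transparent: a $2$-torus $\Z_2^\alpha$ forces the three distinct spherical values to coincide in pairs, so it produces only rectangles, a one-parameter family. The danger is a large finite group, whose many spectral parameters might threaten to sweep out a full three-parameter family of shapes. Here I would press the concyclicity constraint into service as the essential brake: the form $t\cos\beta_{ij}+(1-t)$ carries only a single cosine frequency, whereas a generic spherical function superposes many, so requiring the six values to collapse onto one frequency is a strong, algebraically-defined restriction that ought to pin the dimension of $\Sigma_G$ strictly below three, uniformly in $G$. Establishing this dimension bound robustly against \emph{all} finite groups at once, rather than group by group, is where I expect the genuine difficulty to lie; lacking a clean uniform estimate, the fallback is to forgo the category argument and instead engineer an explicit quadrilateral whose squared side- and diagonal-lengths satisfy an algebraic-independence condition incompatible with the polynomial relations over the algebraic numbers that bind any finite orbit, thereby exhibiting a single non-suborbit outright.
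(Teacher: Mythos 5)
Your setup is sound and matches the standard normalizations: centering the orbit to make the action orthogonal, encoding the configuration by the Gram entries $\langle p_i,p_j\rangle=R^2\chi(g_i^{-1}g_j)$, and parameterizing concyclic embeddings by $t=\rho^2/R^2$ are all correct bookkeeping. But the proof has a genuine hole exactly where you flag it: you never show that any single group's realizable shape set $\Sigma_G$ is nowhere dense in the three-dimensional shape space, and the ``single cosine frequency'' heuristic is not an argument --- for a fixed $G$ the spherical function $\chi$ ranges over a convex body of normalized positive-definite functions (the vector $v_1$ and the multiplicities of irreducibles are continuous parameters), and nothing you write excludes that the projection to shapes has full dimension. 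Two smaller points: your worry about uniformity over all $G$ is a red herring, since Baire category only needs per-group (indeed per-representation) nowhere density and there are countably many finite groups, each with countably many representations conjugate to ones with algebraic matrix entries; and your claim that the \emph{values} of $\chi$ are algebraic is false as stated --- only the coefficients of the polynomial relations defining the attainable sets are algebraic, while the values themselves can be transcendental. This last slip undermines your fallback plan too, because ``an algebraic-independence condition incompatible with the polynomial relations'' presupposes that you have exhibited a nontrivial polynomial relation binding every suborbit quadrilateral, which is precisely what is missing.

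The device that closes this gap --- used by Leader--Russell--Walters and adapted in this paper's proof of Theorem~\ref{thm_central_suborbits} --- is to exploit the affine dependence of concyclic points rather than the Gram matrix. Four points $A,B,C,D$ on a circle satisfy $aA+bB+cC+dD=0$ with $a+b+c+d=0$, and the coefficients $(a:b:c:d)$ are shape coordinates of the quadrilateral, varying over a three-dimensional family. If the points lie in one orbit, then $B=GA$, $C=HA$, $D=KA$ for matrices $G,H,K$ with algebraic entries (after centering, the orbit lies in the complement of the fixed subspace, so no trivial summand makes the determinant degenerate on the hyperplane $a+b+c+d=0$), whence $\det(aI+bG+cH+dK)=0$. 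This is a single nondegenerate polynomial equation with algebraic coefficients in the shape parameters, cutting out a surface; the countable union over all groups, element tuples, and representations misses almost every quadrilateral. Your framework could in principle absorb this identity, but as written the proposal stops at the point where the theorem actually has to be proved.
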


We adapt their argument to establish similar results for spherical suborbits, showing in effect that this approach will not solve the weak Knaster conjecture even for three points.
\begin{theorem}\label{thm_central_suborbits}
There exists a three-point set on $\S^1$ that is not a spherical suborbit of any finite group action.
\end{theorem}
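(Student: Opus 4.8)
The plan is to first reduce the geometric statement to a purely algebraic realizability question, and only then import the arithmetic obstruction of Leader--Russell--Walters.

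\emph{Reduction.} Suppose the three points $x_1,x_2,x_3$ on $\S^1$ form a spherical suborbit. Since the action is orthogonal it fixes the origin, so the three points lie on the unit circle of the plane $\mathrm{span}(x_1,x_2,x_3)$, whose centre is the origin; thus the realization is forced to place the centre of the circle at the $G$-fixed point. Writing $x_i=g_iv$ with $|v|=1$ and setting $h=g_1^{-1}g_2$, $k=g_1^{-1}g_3$, the three pairwise inner products become $\langle v,hv\rangle$, $\langle v,kv\rangle$, $\langle v,h^{-1}kv\rangle$, and they must equal the cosines of the three arcs of the triangle. Conversely, if some finite group, orthogonal action, unit vector $v$ and elements $h,k$ produce these three cosines, then the Gram matrix of $v,hv,kv$ coincides with the rank-two correlation matrix of the prescribed triangle, so $v,hv,kv$ are linearly dependent, span a plane through the origin, and realise $X$ on a standard great circle. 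Hence $X$ is a spherical suborbit if and only if such $G,v,h,k$ exist.

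\emph{Elliptope and reduction to a single irreducible.} The triple $(\langle v,hv\rangle,\langle v,kv\rangle,\langle v,h^{-1}kv\rangle)$ always lies in the elliptope $\mathcal E_3=\{(x,y,z):1-x^2-y^2-z^2+2xyz\ge0\}$, and a prescribed triangle yields a point on its rank-two boundary surface $1-x^2-y^2-z^2+2xyz=0$; for a triangle none of whose arcs is $0$ or $\pi$ this is an extreme point of $\mathcal E_3$. Decomposing $v=\sum_\rho v_\rho$ along the irreducible $G$-submodules, the triple is the convex combination $\sum_\rho|v_\rho|^2\bigl(\langle\hat v_\rho,h\hat v_\rho\rangle,\langle\hat v_\rho,k\hat v_\rho\rangle,\langle\hat v_\rho,h^{-1}k\hat v_\rho\rangle\bigr)$ of the triples of the individual pieces. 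Extremality then forces every piece of positive weight to give the same extreme point, so one may assume the configuration is realised inside a single real irreducible representation, with $v,hv,kv$ spanning only a plane.

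\emph{Small irreducibles.} In a one-dimensional real irreducible (a two-plane rotation block) both $h$ and $k$ act as fixed rational rotations, so all three arcs are rational multiples of $\pi$; in the two-dimensional dihedral irreducibles a short computation I would carry out shows that at least one arc is a rational multiple of $\pi$. Thus the configurations realised by irreducibles of dimension $\le 2$ form a countable union of curves inside the two-parameter family of triangles, and so miss almost every triangle.

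\emph{The obstacle.} The hard part will be the higher-dimensional irreducibles. There the locus of $u$ with $u,hu,ku$ coplanar is positive-dimensional and can sweep out whole two-parameter families of triangles with irrational arcs; for instance the binary icosahedral group acting on $\R^4$ already produces such a family. Consequently a dimension or transcendence count does \emph{not} suffice, and one cannot simply take a generic triangle. Instead, following Leader--Russell--Walters, I would fix one explicit triangle and exclude all finite groups by an arithmetic argument applied to the positive-definite function $\phi(g)=\langle v,gv\rangle$. Here the linear (spherical) setting is an advantage: the forced centre $0$ may be adjoined as a fourth point, so that $\{0,v,hv,kv\}$ plays the role of the four-point configuration of Leader--Russell--Walters, and their obstruction---that the arcs cannot simultaneously satisfy the compatibility forced by lying in a single finite orbit---can be transported to our three-point set. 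Carrying out this arithmetic exclusion for a carefully chosen triangle is the crux; everything preceding it is the reduction that makes their method applicable.
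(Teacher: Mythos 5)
There is a genuine gap: the case you yourself identify as the crux---irreducibles of dimension greater than two---is never proved, only announced as a plan, and the plan as stated does not work. Adjoining the forced centre $0$ as a fourth point in order to invoke Leader--Russell--Walters does not parse: their obstruction concerns four points contained in a \emph{single orbit}, whereas $0$ is a fixed point of the orthogonal action with norm zero and so can never lie in the orbit of the unit vector $v$. Moreover, their proof is itself a countability/dimension argument establishing that \emph{almost every} cyclic quadrilateral fails, not an arithmetic exclusion of one carefully chosen configuration; so ``following LRW'' would lead you straight back to the kind of counting argument you rejected.

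And that rejection is mistaken: the paper proves the theorem exactly by a dimension count, carried out in the right coordinates. Three points $A,B,C$ on the standardly embedded $\S^1$ satisfy a linear dependence $aA+bB+cC=0$; if $B=GA$ and $C=HA$ with $G,H$ matrices of group elements (which may be taken with algebraic entries), then $(aI+bG+cH)A=0$, hence $\det(aI+bG+cH)=0$. This homogeneous polynomial is nonzero (it equals $1$ at $(1,0,0)$) and is unchanged under conjugating the representation, so the countably many data (finite group, representation up to equivalence, pair of elements) give countably many proper algebraic surfaces in $\R^3$, while the set of dependence parameters $(a,b,c)$ realized by triples on $\S^1$ is three-dimensional and the parameter-to-shape map is finite-to-one on nondegenerate triangles; almost every triangle is therefore excluded. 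Your supporting claim about two-parameter families from, say, the binary icosahedral group is refuted by the same identity: for fixed $h,k$ the locus $\{u\in\S^{N-1}: \mathrm{rank}\,[\,u\mid hu\mid ku\,]\le 2\}$ has codimension $N-2$, hence is generically a curve, and in any case every shape it produces has its parameters on the fixed surface $\det(aI+bG+cH)=0$, so each datum contributes at most a one-parameter family of triangles. Your opening reduction and the elliptope-extremality step (and the dihedral computation you defer) are correct, but once the dependence-parameter count is available they are not needed.
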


This means that the suborbit approach to the weak Knaster conjecture fails in general. Still, some particular cases may deserve further investigation, which we do in the subsequent sections.

\section{Dvoretzky's theorem and suborbits}
\label{dvor-sec}

It is known (see~\cite{milman_local}) that the Knaster problem is related to the Dvoretzky theorem. The original Knaster conjecture would imply the Dvoretzky theorem with good estimates on the dimension of the almost spherical section. The weak Knaster conjecture could also give some reasonable results for the Dvoretzky theorem.

In view of Theorem~\ref{knaster-orbits}, it would be interesting to solve the following problem with good estimates on $n$ and $|G|$ in terms of $k$ and $\varepsilon$: 

\begin{problem}
\label{approx-p-suborbit}
For any $k$ and $\varepsilon > 0$ find an orthogonal action of a $p$-torus $G$ on $\S^{n-1}$ and a point $v\in \S^{n-1}$ such that the isometrically included $\S^{k-1}\subset \S^{n-1}$ is contained in the $\varepsilon$-neighborhood of the orbit $Gv$.
\end{problem}

We may rephrase it as follows: Can $\S^{k-1}$ be $\varepsilon$-approximated by a spherical sub-$p$-toral set for any $\varepsilon>0$?

Unfortunately, Problem~\ref{approx-p-suborbit} also has a negative solution. In fact, 
it is not clear how to deduce the Dvoretzky theorem from a positive solution to Problem~\ref{approx-p-suborbit} if it were true; in the standard proofs (see~\cite{milman_schechtman} or~\cite{ledoux_book}, for example) there 
remain some subtleties needed to control the Lipschitz constant through the estimation of the average norm. We also refer the reader to~\cite{burago_ivanov_tabachnikov}, where some other topological approaches to the Dvoretzky theorem are also shown to fail. 

To start investigating this problem, note that any orthogonal action of a $p$-torus (or any other abelian group) $G$ on $\R^n$ is an orthogonal sum of at most $2$-dimensional irreducible representations. One simple way to see this is to observe that this action is diagonalizable after complexification to $\C^n$ and then produce an at most two-dimensional $G$-invariant subspace in $\R^n$ for every eigenvector of this action in $\C^n$. 

The two-dimensional irreducible representations of $G$ only occur when $p$ is odd and then the group $G$ keeps orientations on them. If some of the $G$-invariant subspaces have dimension $1$ (and $p=2$) then we $\oplus$-add the same representation to $\R^n$ (since we are free to increase the dimension in this problem) and again obtain a two-dimensional $G$-invariant subspace with action of $G$ keeping the orientation. Finally, the whole $\R^n$ can be assumed to be split into two-dimensional $G$-invariant subspaces with $G$-action keeping the orientation, that is the action of $G$ can be extended to the action of the torus $\T^n$, which rotates every two-dimensional $G$-invariant subspace independently.

This means that in Problem~\ref{approx-p-suborbit} it is sufficient to consider the standard action of the (honest) torus $\T^n$, in place of $G$, on $\C^n\simeq \R^{2n}$ and its corresponding unit sphere $\S^{2n-1}$. After these preparations we observe the following:

\begin{theorem}
\label{suborbit3-failure}
There exists $\varepsilon > 0$ with the following property: For any $3$-dimensional linear subspace $L\subseteq \R^{2n}$ its sphere of unit vectors $S(L)$ is 
not contained in an $\varepsilon$-neighborhood of any orbit $\T^nx\subset \R^{2n}$. Consequently Problem~\ref{approx-p-suborbit} has a negative solution for $k \ge 3$.
\end{theorem}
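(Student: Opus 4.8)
The plan is to exploit the rigid structure of $\T^n$-orbits in $\C^n$: a point $x\in\C^n$ has coordinates $x_j=r_je^{i\theta_j}$, and its orbit $\T^nx$ is the "Clifford-type" torus $\{(r_1e^{i\phi_1},\dots,r_ne^{i\phi_n}):\phi\in\T^n\}$, which depends only on the modulus vector $r=(r_1,\dots,r_n)$ with $\sum r_j^2=1$. So an orbit is really parametrized by a point of the simplex $\Delta=\{r\in\R^n_{\ge0}:\sum r_j^2=1\}$. The key geometric observation I would isolate first: for any unit vector $u\in S(L)$, the distance from $u$ to the orbit determined by $r$ is exactly $\bigl(\sum_j(|u_j|-r_j)^2\bigr)^{1/2}$, since we may independently rotate each coordinate of $u$ to match the phase of the corresponding coordinate of the orbit point. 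Hence $S(L)$ lies in the $\varepsilon$-neighborhood of $\T^nx$ iff the modulus map $\mu\colon S(L)\to\Delta$, $\mu(u)=(|u_1|,\dots,|u_n|)$, has image within $\varepsilon$ of the single point $r$ in the $\ell^2$ metric.

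So the theorem reduces to a purely metric statement: \emph{there is a universal $\varepsilon>0$ such that for every $3$-dimensional $L\subseteq\C^n$ the set $\mu(S(L))\subseteq\Delta$ has $\ell^2$-diameter at least $2\varepsilon$.} To prove this I would argue by a topological/degree obstruction rather than by direct estimation. Suppose $\mu(S(L))$ had small diameter, so it were contained in a small ball around some $r\in\Delta$. First, I claim all coordinates $r_j$ must then be bounded away from $0$ by a constant depending only on the (fixed, small) $\varepsilon$: indeed $S(L)\cong S^5$, the antipodal map on $S(L)$ descends through $\mu$ (since $\mu(-u)=\mu(u)$), and $\mu$ cannot be nearly constant on $S(L)$ unless... — here is where the real work is. The cleanest route: restrict attention to the real part. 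Inside $L$ choose a real $3$-dimensional subspace $L_\R$ (the real points of $L$ for some choice of complex structure), so $S(L_\R)\cong S^2$; on $S(L_\R)$ the coordinates $u_j$ are real, and $\mu(u)=(|u_1|,\dots,|u_n|)$. The map $u\mapsto(u_1,\dots,u_n)$ is a linear isometric embedding $S^2\hookrightarrow\S^{n-1}$, so its image is a great $2$-sphere; taking absolute values coordinatewise folds this great $2$-sphere by the reflection group $(\Z_2)^n$. If the image of $\mu$ were contained in a ball of radius $\varepsilon$ in $\Delta$, then the great $2$-sphere $S(L_\R)$ would be covered by the $\varepsilon$-balls around the $2^n$ reflected copies of that tiny target region — but a great $2$-sphere has area $4\pi$ while each such $\varepsilon$-ball has area $O(\varepsilon^2)$ and... no, that counting is false since $2^n$ grows. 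The correct obstruction is local: near a generic point $u_0\in S(L_\R)$ with all coordinates nonzero, $\mu$ is a local diffeomorphism onto its image in $\Delta$ with Jacobian bounded below by a universal constant \emph{once we know} $|u_0|$ is not too close to a coordinate subspace, so $\mu(S(L_\R))$ must contain a ball of definite radius. Establishing that a $3$-dimensional $S(L_\R)$ always contains a point uniformly far from all coordinate hyperplanes is a compactness statement: the function $L_\R\mapsto\max_{u\in S(L_\R)}\min_j|u_j|$ is continuous and positive on the (compact) Grassmannian $G(3,n)$ — it is positive because a $3$-plane cannot be contained in the union of the coordinate hyperplanes — but the bound it gives depends on $n$, which is not allowed.

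To get a bound \emph{independent of $n$} — this is the main obstacle — I would reduce to bounded dimension. Given $u_0\in S(L_\R)$ (say a point maximizing $\min_j|u_j|$, which could be tiny if $n$ is huge), pass instead to a point where at least, say, $4$ coordinates are simultaneously of size $\gtrsim n^{-1/2}$: such a point exists on any $3$-sphere by pigeonhole on the coordinate energy. Project onto those $4$ coordinates: we get a $2$-sphere mapping into a $3$-simplex, and on a neighborhood the modulus map $\mu$ restricted to these coordinates has Jacobian $\gtrsim n^{-O(1)}$, hence its image has $\ell^2$-diameter $\gtrsim n^{-O(1)}$ — still not uniform. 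The genuinely uniform argument, which I believe is the intended one, mimics Leader--Russell--Walters: work with a $2$-sphere's worth of \emph{directions} and use that a great $2$-sphere in $\S^{n-1}$, when folded by $(\Z_2)^n$, must stretch a definite amount because the folding is an isometry on each orthant and the orthant decomposition of a great $2$-sphere always has a face-to-face structure forcing at least three full orthant-pieces of area $\ge c$; matching any one of them into an $\varepsilon$-ball of $\Delta$ forces $\varepsilon\ge c'$ by comparing areas \emph{within a single orthant}, where the count $2^n$ never enters. So the concrete plan is: (1) reduce to the modulus map and the statement about $\ell^2$-diameter of $\mu(S(L))$; (2) restrict to a real $2$-sphere $S(L_\R)$; (3) show this great $2$-sphere meets the interior of some closed orthant in a region of area $\ge c$ (a universal constant, by a volume/topology argument on $G(3,n)$-invariant quantities — crucially comparing to the \emph{total} area $4\pi$, which is $n$-independent); (4) on that single orthant $\mu$ is an isometry onto its image in $\Delta$, so $\operatorname{diam}_{\ell^2}\mu(S(L))\ge$ (intrinsic diameter of that area-$c$ region in $\S^{n-1}$, but intrinsic distances dominate chordal ones only up to a factor, so) $\ge c''$; (5) take $\varepsilon=c''/3$ and conclude, with the final sentence about $k\ge3$ following since any $L$ with $\dim_\R L\ge4$ and $3\mid\dim$... — precisely, $S^{k-1}$ for $k\ge3$ contains an isometric $S^2$, and an isometric $S^2$ lies in no $\T^n$-orbit $\varepsilon$-neighborhood by the above. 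The step I expect to fight hardest with is (3): extracting a universal lower bound on the area of $S(L_\R)\cap(\text{one orthant})$ that does not degrade as $n\to\infty$; I would try to prove it by contradiction via a compactness argument in a weak limit, or by directly showing the orthant-decomposition of a great $2$-sphere always has at least one face of normalized area $\ge 1/4$ using the fact that the $2$-sphere is connected and symmetric under $u\mapsto-u$.
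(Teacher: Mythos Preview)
Your reduction to the modulus map $\mu(u)=(|u_1|,\dots,|u_n|)$ and the distance formula $\dist(u,\T^n x)^2=\sum_j(|u_j|-r_j)^2$ is exactly right and matches the paper. After that, however, the proposal stalls on a confusion about the setup and misses the key dimensional observation. The subspace $L\subseteq\R^{2n}=\C^n$ is already a \emph{real} $3$-plane, so $S(L)\cong S^2$ from the start; there is no further ``real part $L_\R$'' to pass to, and the coordinates $u_j\in\C$ of points of $L$ are genuinely complex-valued. In particular your step (4) fails: on a single orthant (region with all $u_j\neq 0$) the map $\mu$ is \emph{not} an isometry, since varying $u$ inside $L$ changes both moduli and arguments of the $u_j$. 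Your orthant-area argument in step (3) also never gets off the ground, as you yourself note.

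The paper's route is entirely different and clean. Write $L$ as the image of a real linear map $\lambda\colon\R^3\to\C^n$ with coordinate functions $\lambda_i\colon\R^3\to\C\cong\R^2$. The crucial observation is purely dimensional: $3>2$, so each $\lambda_i$ has a nonzero kernel, and one picks a unit vector $e_i\in S^2$ with $\lambda_i(e_i)=0$. Now average over $v\in S^2$ and use Archimedes' theorem: for each $i$ one can write $v=t_i e_i+\sqrt{1-t_i^2}\,u_i$ with $t_i$ uniform on $[-1,1]$, so $|\lambda_i(v)|=|\lambda_i(u_i)|\sqrt{1-t_i^2}$. A one-line calculus lemma shows $\E_t[(r\sqrt{1-t^2}-c)^2]\ge\tfrac1{16}\E_t[(r\sqrt{1-t^2})^2]$ for any $r,c\ge0$, and summing over $i$ gives $\E_v\dist(\lambda(v),\T^n\bar c)^2\ge\tfrac1{16}\sum_i\E_v|\lambda_i(v)|^2=\tfrac1{16}$. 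Hence some $v$ is at distance $\ge1/4$ from the orbit, uniformly in $n$ and $L$ --- the uniformity you were fighting for comes for free from the averaging, with no compactness or orthant combinatorics needed.
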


\begin{lemma}
Suppose $r$ and $c$ are non-negative real numbers, and $t$ is a random variable uniformly
distributed on the interval $[-1,1]$. Then 
$\E\bigl[ ( r\sqrt{1-t^2}-c)^2\bigr]\geq \tfrac{1}{16} \E\bigl[ (r\sqrt{1-t^2})^2 ]$.
\end{lemma}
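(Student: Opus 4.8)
The plan is to compute both sides explicitly and reduce the claimed inequality to a comparison of two one-dimensional integrals. First I would write $\E\bigl[(r\sqrt{1-t^2})^2\bigr] = r^2\,\E[1-t^2]$, and since $t$ is uniform on $[-1,1]$ we have $\E[1-t^2] = 1 - \tfrac13 = \tfrac23$, so the right-hand side equals $\tfrac{1}{16}\cdot\tfrac23 r^2 = \tfrac{r^2}{24}$. Thus it suffices to show $\E\bigl[(r\sqrt{1-t^2}-c)^2\bigr] \geq \tfrac{r^2}{24}$ for all $r,c\geq 0$.

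Next I would exploit the fact that the left-hand side is a quadratic in $c$: expanding, $\E\bigl[(r\sqrt{1-t^2}-c)^2\bigr] = r^2\,\E[1-t^2] - 2cr\,\E[\sqrt{1-t^2}] + c^2$. As a function of $c$ this is minimized at $c_* = r\,\E[\sqrt{1-t^2}]$, and the minimum value is $r^2\bigl(\E[1-t^2] - (\E[\sqrt{1-t^2}])^2\bigr) = r^2\operatorname{Var}(\sqrt{1-t^2})$. Since the minimizer $c_*$ is itself non-negative, the worst case over $c\geq 0$ really is this unconstrained minimum, so the lemma reduces to the clean statement $\operatorname{Var}(\sqrt{1-t^2}) \geq \tfrac{1}{24}$ for $t$ uniform on $[-1,1]$.

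Finally I would evaluate $\E[\sqrt{1-t^2}] = \tfrac12\int_{-1}^1\sqrt{1-t^2}\,dt = \tfrac{\pi}{4}$ (half the area of the unit semicircle region, normalized), so that $\operatorname{Var}(\sqrt{1-t^2}) = \tfrac23 - \tfrac{\pi^2}{16}$. Numerically this is about $0.6667 - 0.6169 = 0.0498$, which indeed exceeds $\tfrac{1}{24}\approx 0.0417$, closing the argument. The only mild subtlety — hardly a real obstacle — is confirming that restricting to $c\geq 0$ does not change the infimum; this is immediate because the unconstrained minimizer $c_* = \tfrac{\pi}{4}r$ is already non-negative. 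The constant $\tfrac{1}{16}$ in the statement is comfortably loose, which is presumably why the authors chose it: it avoids having to track $\pi^2$ through the later estimates.
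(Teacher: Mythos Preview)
Your argument is correct and is essentially identical to the paper's: both reduce to minimizing the quadratic $\tfrac{2}{3} - \tfrac{\pi}{2}c + c^2$ (after scaling out $r$) and verify that the minimum $\tfrac{2}{3}-\tfrac{\pi^2}{16}$ exceeds $\tfrac{1}{24}$. Your phrasing in terms of $\operatorname{Var}(\sqrt{1-t^2})$ and your explicit check that the minimizer $c_*=\tfrac{\pi}{4}r$ lies in the allowed range $c\geq 0$ are nice touches, but the underlying computation is the same.
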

\begin{proof}
As $\int_0^1 (1-t^2)\,dt=2/3$ it suffices to show that $\int_0^1 (\sqrt{1-t^2}-c)^2\,dt\geq 1/24$ for every $c\geq 0$. The
integral is $\tfrac{2}{3}+c^2-c\pi/2$, which is minimized at $c=\pi/4$. The minimum is $2/3-\pi^2/16>1/24$.
\end{proof}

\begin{proof}[Proof of theorem~\ref{suborbit3-failure}]
It is natural to identify $\R^{2n}$ with $\C^n$. Then the action of $\T^n$ will be by multiplying all the coordinates 
by complex numbers of unit norm. So an orbit is defined by the relations
$$
|z_1|= c_1, |z_2|=c_2, \ldots, |z_n|=c_n,
$$
and the squared distance to an orbit is given by:
\begin{equation}
\dist(\bar z, \T^n\bar c)^2 = \sum_{i=1}^n \left(|z_i| - c_i\right)^2.
\end{equation}

Assume that $L$ is an image of an $\R$-linear map $\lambda\colon \R^3\to \C^n$ 
with coordinates $\lambda_i\colon \R^3\to \C$. The map $\lambda_i$
has a non-trivial kernel. Let $e_i\in \R^3$ be a unit vector such that
$\lambda_i(e_i)=0$. 

Our next step is to show that if $v$ is a vector chosen uniformly on $\S^2$, then $\lambda(v)$ is
far from $T^n \bar c$. For every $i$ the vector $v$ can be represented as follows.
First pick a vector $u_i$ uniformly on the equatorial circle perpendicular to $e_i$, and then pick a real 
number $t_i$ uniformly at random from $[-1,1]$. Then set $v=t_i\cdot e_i+\sqrt{1-t_i^2}\cdot u_i$. By the theorem of Archimedes~\cite{archimedes_sphere} on the surface area of sections of a sphere by parallel planes, the resulting vector $v$ is uniformly distributed on $\S^2$. 

We shall think of the random variables $t_i$ and $u_i$ as defined on the same probability space, coupled by the common
value of $u$. From linearity of expectation and by the preceding lemma we deduce
\begin{align*}
  \E_v \bigl[ \dist(\lambda(v),\T^n\bar c)^2 ]&=\sum_{i=1}^n \E_v \left(\lvert\lambda_i(v)\rvert - c_i\right)^2\\
&=\sum_{i=1}^n 
\E_{u_i} \E_{t_i} \bigl[(\lvert\lambda_i(u_i)\rvert\sqrt{1-t_i^2}-c_i)^2\bigr]\\
&\geq \sum_{i=1}^n \E_{u_i} \E_{t_i} \bigl[(\lvert\lambda_i(u_i)\rvert\sqrt{1-t_i^2})^2\bigr]/16\\
&= \sum_{i=1}^n \E_v |\lambda_i(v)|^2/16=1/16\\
\end{align*}
since $\lambda(v)$ has norm~$1$. Hence, no matter what
$\bar c$ is, there is a point in the image of $\lambda$
that is at distance at least $1/4$ from $\T^n\bar c$.
\end{proof}

\begin{remark}
It follows that sufficiently dense subsets of $\S^2$ are not spherical suborbits of commutative group actions and cannot be approximated by spherical suborbits of commutative group actions. Indeed, if a finite set $X\subset \S^2$ is such that $\S^2$ is in the $\varepsilon$-neighborhood of $X$ and $X$ is in the $\varepsilon$-neighborhood of an orbit $Gv$ (for an abelian $G$ after some isometric inclusion $\S^2\subset \R^N$) then $\S^2$ is in the $2\varepsilon$-neighborhood of this orbit, which contradicts Theorem~\ref{suborbit3-failure}.
\end{remark}

\begin{remark}
In~\cite[Proposition~2.3]{matousek_rodl_sphere} it is proved that any $\S^k$ \emph{can} be $\varepsilon$-approximated by a suborbit of the standard coordinate-wise action of the symmetric group $\Sigma_n$ on $\S^{n-1}$ with sufficiently large $n = n(k,\varepsilon)$. 
\end{remark}

\begin{problem}
What about suborbits of $p$-groups in Problem~\ref{approx-p-suborbit}? Can the proof in~\cite{matousek_rodl_sphere} be modified to produce a $p$-group instead of $\Sigma_n$? Note that in~\cite{dolnikov_karasev} the following result was 
established during the proof of~\cite[Theorem~1]{dolnikov_karasev}: There exists $n=2^\ell$ depending on $k$ and $d$, and a $k$-dimensional subspace $L\subseteq \R^n$ with the following property: any $G$-invariant polynomial $P\colon \R^n\to \R$ of degree at most $d$ is constant on the unit sphere of $L$. Here $G$ is $\Z_2^n\rtimes \Sigma_n^{(2)}$, the $2$-group generated by reflections and $2$-Sylow permutations 
of coordinates. Does it follow that such $S(L)$ is close to a $G$-orbit in $\R^n$?
\end{problem}

\begin{remark}
Note that orbits of nonabelian $p$-groups are not so useful in the Dvoretzky theorem as orbits of $p$-tori because in the corresponding version of Theorem~\ref{knaster-orbits} 
no explicit formula for $n$ is known so far, see the discussion in~\cite{dolnikov_karasev}.
\end{remark}

\section{Euclidean suborbits and the Euclidean Knaster problem}
\label{pos-eu-sec}
In this section we consider Euclidean suborbits, as in the Ramsey-type problems. Informally, their difference from spherical suborbits is that, after possibly increasing the dimension, we are free to choose any origin of the group action. The following argument is a minor adaptation of Frankl--R\"{o}dl's argument from~\cite{frankl_rodl_triangles_orig,frankl_rodl_simplices}.

\begin{lemma}
\label{sub-p-prod}
If $X$ and $Y$ are Euclidean sub-$p$-toral, then so is $X\times Y$.
\end{lemma}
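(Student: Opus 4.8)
The plan is to realise $X\times Y$ as a suborbit of the direct product of the two $p$-tori that witness $X$ and $Y$ individually. First I would unpack the hypothesis: there is a $p$-torus $G=\Z_p^\alpha$ acting by isometries on some $\R^m\supseteq\R^k$ and a point $v\in\R^m$ with $X\subseteq Gv$, and likewise a $p$-torus $H=\Z_p^\beta$ acting by isometries on some $\R^n\supseteq\R^\ell$ and a point $w\in\R^n$ with $Y\subseteq Hw$.

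Next I would let $G\times H$ act on $\R^{m+n}=\R^m\oplus\R^n$ by $(g,h)\cdot(x,y)=(gx,hy)$. Each factor acts by isometries on its own orthogonal summand, so the product acts by isometries on $\R^{m+n}$; and $G\times H\cong\Z_p^{\alpha+\beta}$ is again a $p$-torus. Under the orthogonal direct sum of the two given isometric inclusions, $\R^{k+\ell}=\R^k\oplus\R^\ell$ sits isometrically inside $\R^{m+n}$, and the only thing to check is that the Euclidean metric on $\R^{k}\oplus\R^{\ell}$ restricts to the product metric on $X\times Y$, which is immediate from $\lvert(x_1,y_1)-(x_2,y_2)\rvert^2=\lvert x_1-x_2\rvert^2+\lvert y_1-y_2\rvert^2$. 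Finally
\[
X\times Y\ \subseteq\ (Gv)\times(Hw)\ =\ (G\times H)\cdot(v,w),
\]
so $X\times Y$ lies in a single orbit of the $p$-torus $G\times H$, i.e.\ it is Euclidean sub-$p$-toral.

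I do not expect a real obstacle: the entire content is that in the Euclidean setting we may freely take orthogonal direct sums of ambient spaces (so choosing origins is painless) and that products of $p$-tori are $p$-tori. This is the orbit analogue of the Frankl--R\"{o}dl product step for Ramsey sets, but here no coloring or compactness argument is needed, which is why the adaptation is ``minor''. The only point worth recording for later applications is the bookkeeping on the group size: if $X$ is a suborbit of a group of order $q$ and $Y$ of a group of order $q'$, then the construction presents $X\times Y$ as a suborbit of a group of order $qq'$, which is what matters when Theorem~\ref{knaster-orbits} is invoked.
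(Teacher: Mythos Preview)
Your argument is correct and is exactly the paper's approach: form the componentwise action of $G\times H$ on the orthogonal direct sum and observe that $X\times Y\subseteq (Gv)\times(Hw)=(G\times H)\cdot(v,w)$. You have simply spelled out the details (the product metric check, that $\Z_p^\alpha\times\Z_p^\beta$ is again a $p$-torus) that the paper leaves implicit.
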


\begin{proof}
If $X$ is an orbit of a $p$-torus $G_1$ in its representation $V_1$ and $Y$ is an orbit of a $p$-torus $G_2$ in its representation $V_2$, 
then there is an obvious componentwise $G_1\times G_2$-action on $V_1\times V_2$ with orbit $X\times Y$. Passing to suborbits is obvious.
\end{proof}

\begin{lemma}
\label{sub-p-twopoint-prod}
If $X$ is Euclidean sub-$p$-toral, then so is $X\times \{0,t\}$.
\end{lemma}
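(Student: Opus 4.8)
The plan is to reduce everything to the product lemma just established. A Cartesian product $X\times\{0,t\}$ is literally the situation covered by Lemma~\ref{sub-p-prod}, provided we first know that the two-point set $\{0,t\}\subset\R$ is itself Euclidean sub-$p$-toral. So the whole content of the argument is to exhibit $\{0,t\}$ as a suborbit of a $p$-torus.

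To do that I would realize $\{0,t\}$, up to isometry, as a two-element suborbit of the cyclic group $\Z_p$, which is a $p$-torus (the case $\alpha=1$). Let $\Z_p$ act on $\C\simeq\R^2$ by multiplication by a primitive $p$-th root of unity $\zeta=e^{2\pi i/p}$; this is an orthogonal action. The orbit of a point $v\in\C\setminus\{0\}$ is $\{v,\zeta v,\dots,\zeta^{p-1}v\}$, and the distance between $v$ and $\zeta v$ equals $|v|\,|1-\zeta|=2|v|\sin(\pi/p)$ (for $p=2$ this is just $2|v|$, the orbit being $\{v,-v\}$). Choosing $|v|=t/\bigl(2\sin(\pi/p)\bigr)$, the two-element subset $\{v,\zeta v\}$ of this orbit is isometric to $\{0,t\}$. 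Hence $\{0,t\}$ is Euclidean sub-$p$-toral.

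Now I would simply apply Lemma~\ref{sub-p-prod} with $Y=\{0,t\}$: since $X$ is Euclidean sub-$p$-toral by hypothesis and $Y$ is Euclidean sub-$p$-toral by the previous paragraph, the product $X\times\{0,t\}$ is Euclidean sub-$p$-toral. (The notion is isometry invariant, so it does not matter that Lemma~\ref{sub-p-prod} produces an isometric copy of $Y$ rather than $Y$ on the nose.)

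There is essentially no obstacle here; the only point requiring minimal care is that the construction of $\{0,t\}$ as a suborbit must work uniformly for every prime $p$, including $p=2$ — which it does, since $\zeta=-1$ realizes the two-point orbit $\{v,-v\}$ just as well. The real work of this section is not this lemma but the way the doubling operation $X\mapsto X\times\{0,t\}$ will be iterated, together with Lemma~\ref{sub-p-prod}, to obtain non-degenerate simplices as Euclidean sub-$p$-toral sets.
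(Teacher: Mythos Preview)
Your proof is correct and follows essentially the same approach as the paper: exhibit $\{0,t\}$ as a suborbit of a single $\Z_p$, then invoke Lemma~\ref{sub-p-prod}. The only cosmetic difference is that the paper realizes $\{0,t\}$ inside the regular $(p-1)$-simplex of edge length $t$ (with $\Z_p$ cyclically permuting the vertices) rather than inside a scaled regular $p$-gon in $\C$; both constructions work equally well.
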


\begin{proof}
Let $Y$ be the regular $(p-1)$-simplex of side length $t$; it obviously has an orthogonal $\Z_p$-action by cyclic permutations. 
Then $X\times Y$ is Euclidean sub-$p$-toral by Lemma~\ref{sub-p-prod} and contains $X\times \{0,t\}$.
\end{proof}

\begin{lemma}
\label{wide-p-triangles}
There is a sequence of angles $\alpha$ tending to $\pi$ such that the isosceles triangle 
with central angle $\alpha$ is Euclidean sub-$p$-toral. Here, $p$ tends to infinity as $\alpha$ tends to $\pi$.
\end{lemma}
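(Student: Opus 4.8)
The plan is to realize the required triangles as triples of consecutive vertices of regular polygons, via the rotation action of $\Z_p$ on the plane. For a prime $p$ let $\Z_p$ act on $\R^2\cong\C$ by multiplication by $\zeta=e^{2\pi i/p}$; this is an action by isometries, so the orbit of any nonzero point $v$ is a Euclidean sub-$p$-toral set (the acting $p$-torus being $\Z_p$ itself). This orbit is the regular $p$-gon $\{v,\zeta v,\dots,\zeta^{p-1}v\}$ inscribed in the circle of radius $|v|$, and since we are free to rescale $v$, it suffices to show that some triple of its points is an isosceles triangle whose central angle is close to $\pi$.

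First I would take the three consecutive vertices $V_0=v$, $V_1=\zeta v$, $V_2=\zeta^2 v$. As $\zeta$ acts by a rotation, $|V_0V_1|=|V_1V_2|$, so $V_0V_1V_2$ is isosceles with apex $V_1$, and being a subset of the orbit $\Z_p v$ it is Euclidean sub-$p$-toral. It remains only to compute the central angle at $V_1$.

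Next I would evaluate that angle. From $|V_0-V_1|^2=|V_1-V_2|^2=2|v|^2\bigl(1-\cos(2\pi/p)\bigr)$ and $|V_0-V_2|^2=2|v|^2\bigl(1-\cos(4\pi/p)\bigr)$, the law of cosines (or, more directly, the inscribed angle theorem applied to the major arc $V_0V_2$) gives $\cos\alpha=-\cos(2\pi/p)$, that is $\alpha=\pi-2\pi/p$. Letting $p$ run through the primes, the angles $\pi-2\pi/p$ form a sequence tending to $\pi$, with $p\to\infty$ as $\alpha\to\pi$; this is exactly the assertion.

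The argument is thus quite short, and the real content is the choice of construction rather than any technical hurdle. It is worth recording why $p$ must be allowed to grow, and why the product Lemmas~\ref{sub-p-prod} and~\ref{sub-p-twopoint-prod} by themselves do not help: in an orbit that is a product of two-point sets or of regular simplices, the squared base of an isosceles triangle is at most the sum of the squared lengths of the two equal sides (the base "lives over" the union of their coordinate supports), which forces the central angle to be at most $\pi/2$. It is precisely the cyclic, non-product geometry of the $p$-gon — short adjacent edges, a second-neighbour chord almost twice as long — that drives the central angle past $\pi/2$ and up to $\pi$ as $p\to\infty$.
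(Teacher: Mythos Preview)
Your proof is correct and follows exactly the paper's approach: three consecutive vertices of a regular $p$-gon, viewed as a $\Z_p$-orbit, form an isosceles triangle whose apex angle tends to $\pi$ as $p\to\infty$. Your computed value $\alpha=\pi-2\pi/p$ is in fact the correct interior angle of the regular $p$-gon (the paper records $\pi(1-1/p)$, a harmless slip that does not affect the argument).
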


\begin{proof}
Consider $p$ points uniformly distributed on a circle. Then three consecutive points form a triangle with the central angle $\pi(1-\frac{1}{p})$.
\end{proof}

\begin{lemma}
\label{all-iso-p-triangles}
For every angle $\alpha<\pi$, there is a Euclidean $p$-toral set $X$ containing an isosceles triangle with central angle $\alpha$. Here, $p$ 
can be chosen to be any prime larger than a certain $p_0(\alpha)$.
\end{lemma}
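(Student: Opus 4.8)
The plan is to start from the wide isosceles triangle $W_p$ furnished by Lemma~\ref{wide-p-triangles} --- three consecutive vertices of the regular $p$-gon, which form a suborbit of the rotation action of $\Z_p$ and are therefore Euclidean sub-$p$-toral --- and then to shrink its central angle $\beta_p$ down to the prescribed value $\alpha$ by taking a single product with a two-point set, using Lemma~\ref{sub-p-twopoint-prod}. The key point is that $\beta_p\to\pi$ as $p\to\infty$, so for every prime $p$ larger than some $p_0(\alpha)$ we have $\beta_p>\alpha$; a continuity argument then lets us realize the intermediate value $\alpha$ exactly.

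Concretely, fix $\alpha<\pi$ and a prime $p>p_0(\alpha)$, so that $\beta_p>\alpha$. Write $A$ for the apex of $W_p$ and $B,C$ for its base vertices, so $|AB|=|AC|=s$ and $|BC|=2s\sin(\beta_p/2)$. By Lemma~\ref{sub-p-twopoint-prod}, the set $W_p\times\{0,t\}$ (with $W_p$ embedded in $\R^2\times\{0\}\subset\R^3$ and $\{0,t\}$ in the last coordinate) is Euclidean sub-$p$-toral for every $t>0$. Inside it consider the three points $(B,0)$, $(C,0)$, $(A,t)$: one has $|(B,0)-(C,0)|=|BC|$ and $|(B,0)-(A,t)|=|(C,0)-(A,t)|=\sqrt{s^2+t^2}$, so they span an isosceles triangle whose apex is $(A,t)$ and whose central angle $\gamma(t)$ satisfies $\sin\bigl(\gamma(t)/2\bigr)=\dfrac{s\sin(\beta_p/2)}{\sqrt{s^2+t^2}}$. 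The right-hand side is continuous and strictly decreasing on $[0,\infty)$, with value $\sin(\beta_p/2)$ at $t=0$ and limit $0$ as $t\to\infty$; hence $\gamma(t)$ decreases continuously from $\beta_p$ to $0$. Since $\alpha\in(0,\beta_p)$, the intermediate value theorem supplies $t^\ast>0$ with $\gamma(t^\ast)=\alpha$, and $X=W_p\times\{0,t^\ast\}$ is then a Euclidean sub-$p$-toral set containing an isosceles triangle with central angle $\alpha$. (If one wants $X$ to be an honest orbit rather than a suborbit, take instead the product of the whole regular $p$-gon with the regular $(p-1)$-simplex of side $t^\ast$ carrying its cyclic $\Z_p$-action; by Lemma~\ref{sub-p-prod} this is an orbit of $\Z_p\times\Z_p$, and it still contains the triangle constructed above.)

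The only genuinely new ingredient is the wide seed $W_p$. Without it, iterating Lemma~\ref{sub-p-twopoint-prod} starting from a single point produces only subsets of rectangular boxes, and no box contains an isosceles triangle with apex angle larger than $\pi/2$ (in a box the cosine of the apex angle is a sum of squares divided by a square, hence nonnegative); so obtuse apex angles force us to begin with the regular $p$-gon. Once that choice is made the remaining work --- matching the base-to-leg ratio and invoking the intermediate value theorem --- is routine, and $p_0(\alpha)$ is simply whatever makes $\beta_p>\alpha$, which is on the order of $\pi/(\pi-\alpha)$.
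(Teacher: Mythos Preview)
Your proof is correct and follows essentially the same route as the paper: start from the wide isosceles triangle of Lemma~\ref{wide-p-triangles}, take its product with $\{0,t\}$ via Lemma~\ref{sub-p-twopoint-prod}, and apply the intermediate value theorem as $t$ ranges over $[0,\infty)$. Your version is slightly more explicit (you compute $\sin(\gamma(t)/2)$ directly), and the closing remarks about orbits versus suborbits and the necessity of the wide seed are correct extras, but the core argument is identical to the paper's.
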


\begin{proof}
Let $\alpha'>\alpha$ be any of the special angles in Lemma~\ref{wide-p-triangles}. Let $ABC$ be an isosceles triangle with angle $\alpha'$. Let the base be $BC$. Consider the Cartesian product of $ABC$ with $\{0,t\}$. The three points $(A,t)$, $(B,0)$, $(C,0)$ form an isosceles triangle whose angle varies from $\alpha'$ to $0$ as $t$ varies from $0$ to $+\infty$. Hence, there is a $t$ such that the angle is $\alpha$. The result now follows from Lemma~\ref{sub-p-twopoint-prod}.
\end{proof}

\begin{theorem}
\label{all-p-triangles}
Every triangle is Euclidean sub-$p$-toral for all sufficiently large primes $p$.
\end{theorem}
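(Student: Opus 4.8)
The plan is to recast the claim as a positivity statement about a cone in $\R^3$ and then settle it by an approximation argument, using regular polygons (which are orbits of cyclic groups, exactly as in the proof of Lemma~\ref{wide-p-triangles}) in place of isosceles triangles.

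First, the reduction. Encode a three-point set $T$ by the vector $v(T)\in\R^3$ of its three squared side lengths, and for integers $j,k$ put $w_p(j,k):=\bigl(2-2\cos(2\pi j/p),\,2-2\cos(2\pi k/p),\,2-2\cos(2\pi(j+k)/p)\bigr)$, the vector of squared side lengths of the triple of vertices $0,j,j+k$ of a regular $p$-gon inscribed in the unit circle. By the discussion at the start of Section~\ref{dvor-sec}, an orthogonal action of a $p$-torus decomposes into $2$-dimensional blocks on each of which the group acts either trivially or by rotations through multiples of $2\pi/p$ (the image of $(\Z_p)^\alpha$ in $\SO(2)$ being cyclic, hence of order $1$ or $p$). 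Consequently, if $T$ lies in an orbit of $(\Z_p)^\alpha$, then on each block its three vertices land on such a $p$-gon, scaled by some $\rho_l\ge 0$, so $v(T)=\sum_l\rho_l^2\,w_p(j_l,k_l)$ for suitable integers $j_l,k_l$. Conversely, every such combination comes from an honest orbit of $(\Z_p)^m$: on the $l$-th coordinate plane put a regular $p$-gon of radius $\rho_l$ with $\Z_p$ rotating it, and place the three vertices of $T$ at the positions $0,j_l,j_l+k_l$ (cf. Lemma~\ref{sub-p-prod}). So it suffices to prove that, for every sufficiently large prime $p$, the vector $v(T)$ lies in the convex cone $\mathcal K_p$ generated by the finitely many vectors $w_p(j,k)$.

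Next, I would locate $v(T)$ inside an open cone. Every triangle is cyclic, so if $T$ has circumradius $R$ and angles $A,B,C$, then the inscribed-angle theorem and the law of sines give $v(T)=R^2\bigl(2-2\cos(2A),\,2-2\cos(2B),\,2-2\cos(2C)\bigr)$ with $2A+2B+2C=2\pi$. Thus $v(T)$ belongs to the set $\mathcal C\subset\R^3$ of all triples of positive reals whose square roots obey the three strict triangle inequalities; $\mathcal C$ is an \emph{open} convex cone, convexity following from the concavity of $(e_1,e_2)\mapsto e_1+e_2+2\sqrt{e_1e_2}$. On the other hand every generator $w_p(j,k)$ lies in $\overline{\mathcal C}$ (the non-degenerate ones being squared-side vectors of triangles inscribed in the unit circle, with arcs $2\pi j/p$, $2\pi k/p$, $2\pi(p-j-k)/p$), and as $p\to\infty$ every ray of $\overline{\mathcal C}$ is approximated arbitrarily well by a ray through some $w_p(j,k)$.

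The decisive step is that $v(T)$ is an \emph{interior} point of $\overline{\mathcal C}$; combined with the density just noted, this means that for all sufficiently large primes $p$ one can choose three linearly independent generators $w_p(j_i,k_i)$, $i=1,2,3$, whose conic hull contains $v(T)$ in its interior, and the unique solution of the resulting $3\times 3$ linear system then writes $v(T)=\mu_1 w_p(j_1,k_1)+\mu_2 w_p(j_2,k_2)+\mu_3 w_p(j_3,k_3)$ with all $\mu_i>0$; by the reduction, $T$ embeds in an orbit of $(\Z_p)^3$. The place where this differs from the isosceles case of Lemma~\ref{all-iso-p-triangles}, and the main obstacle, is that we may \emph{not} take $p$ to be the least common denominator of three rational cyclic triangles — that modulus would normally be composite rather than a $p$-torus — so no exact algebraic identity is available, and one must exploit the openness of $\mathcal C$ to absorb the arithmetic error. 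It is also why one cannot restrict to isosceles inscribed triangles: the convex cone generated by the squared-side vectors of isosceles triangles alone is strictly smaller than $\overline{\mathcal C}$; for instance it does not contain the very flat obtuse triangle with squared side lengths $1$, $100$ and $120$.
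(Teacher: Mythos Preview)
Your argument is correct and takes a genuinely different route from the paper's. The paper proves Theorem~\ref{all-p-triangles} by iterating the Frankl--R\"odl product trick: starting from wide isosceles triangles on a regular $p$-gon (Lemma~\ref{wide-p-triangles}), it reaches all isosceles triangles via a product with $\{0,t\}$ (Lemma~\ref{all-iso-p-triangles}), and then all triangles by one more such product. You instead encode triangles by squared side-length vectors, characterise Euclidean sub-$p$-toral triangles as exactly those whose vector lies in the cone $\mathcal K_p$ generated by the $w_p(j,k)$, and finish by a density-plus-interior argument in the open convex cone $\mathcal C$. This is precisely the Euclidean analogue of what the paper does later, in Section~\ref{pos-sph-sec}, for \emph{spherical} sub-$p$-toral triangles (there one works with the bounded convex body $R=\conv(\{0\}\cup\Sigma)$ rather than a cone, because the circumradius is capped at~$1$); you have in effect discovered that the Section~\ref{pos-sph-sec} machinery already yields an alternative proof of Theorem~\ref{all-p-triangles}. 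Two minor points: you should say explicitly that a finite group acting by Euclidean isometries fixes the centroid of any orbit, so that after a translation the action is linear orthogonal and the block decomposition from Section~\ref{dvor-sec} applies; and your closing remark about the cone over isosceles triangles being too small is beside the point, since the paper combines isosceles triangles not conically but via Cartesian products with segments. The trade-off is that the Frankl--R\"odl construction extends to simplices in all dimensions (Theorems~\ref{sup-p-dense} and~\ref{s-sub-p-toral}), whereas your cone argument yields, as a bonus, the exact description $\mathcal K_p$ of Euclidean sub-$p$-toral triangles for every fixed $p$, not only large ones.
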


\begin{proof}
Consider an isosceles triangle $ABC$ with base $BC$ from Lemma~\ref{all-iso-p-triangles}. Consider 
the product of $ABC$ with $\{0,t\}$.  The three points $(A,0)$, $(B,t)$, $(C,0)$ form any 
triangle one desires, see the details in~\cite{frankl_rodl_triangles_orig}.
\end{proof}

The proof that all simplices are Euclidean sub-$p$-toral is similar, if one follows the idea from the follow-up paper of Frankl--R\"{o}dl~\cite{frankl_rodl_simplices}:

\begin{lemma}
\label{z-sub-p-toral}
For $\varepsilon>0$, a positive integer $m$, and a sufficiently large prime $p\ge p_0(\varepsilon, m)$ there is a Euclidean sub-$p$-toral set $B=\{b_1,...,b_m\}$ 
such that $|\dist(b_i, b_j)-|i-j||<\varepsilon$ for every pair $\{i,j\}$.
\end{lemma}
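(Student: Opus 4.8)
The plan is to realize $\{1,\dots,m\}$ with its path metric, up to additive error $\varepsilon$, as $m$ consecutive vertices of one large regular $p$-gon (the same mechanism as in Lemma~\ref{wide-p-triangles}, now with $m$ consecutive vertices instead of three). I would fix the circle in $\R^2$ centred at the origin whose circumference equals $p$, i.e.\ of radius $R=p/(2\pi)$, identify $\R^2$ with $\C$, take $P_j=R\,e^{2\pi i j/p}$ for $j=0,\dots,p-1$, and set $b_i:=P_{i-1}$ for $i=1,\dots,m$ (legitimate once $p_0(\varepsilon,m)$ is chosen to be at least $m$).

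The first step is to observe that $B=\{b_1,\dots,b_m\}$ is Euclidean sub-$p$-toral. Indeed $\Z_p=(\Z_p)^1$ is a $p$-torus, it acts on $\R^2\cong\C$ by multiplication by the $p$-th roots of unity, this action is orthogonal and hence by isometries, and $\{P_0,\dots,P_{p-1}\}$ is a single orbit of it; thus $B$ lies in an orbit of a $p$-torus acting isometrically on $\R^2$, which is precisely the definition.

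The second step is the metric estimate. The chord between two vertices of the regular $p$-gon that are $s$ steps apart has length $2R\sin(\pi s/p)=\tfrac{p}{\pi}\sin(\pi s/p)$, so $\dist(b_i,b_j)=\tfrac{p}{\pi}\sin\!\bigl(\pi|i-j|/p\bigr)$. Applying the elementary two-sided bound $x-\tfrac{x^3}{6}\le\sin x\le x$, valid for $x\ge 0$, with $x=\pi s/p$ and $s=|i-j|\le m-1$, one gets
\[
0\ \le\ |i-j|-\dist(b_i,b_j)\ \le\ \frac{\pi^2 s^3}{6p^2}\ \le\ \frac{\pi^2 m^3}{6p^2},
\]
which is below $\varepsilon$ as soon as $p>\pi m^{3/2}/\sqrt{6\varepsilon}$. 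Hence one may take $p_0(\varepsilon,m)$ to be any prime exceeding $\max\{m,\ \pi m^{3/2}/\sqrt{6\varepsilon}\}$.

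I do not anticipate a genuine obstacle: the content of the lemma is just that a short arc of a very large circle is nearly straight, together with the fact that equally spaced points on a circle form a cyclic orbit. The only two points that need to be stated rather than merely asserted are the identification of the rotation group as a $p$-torus acting by isometries (immediate from the definitions, since $\Z_p=(\Z_p)^1$ and plane rotations are orthogonal) and the uniform quantitative version of $\tfrac{p}{\pi}\sin(\pi s/p)\to s$ over $s\le m-1$, which is the displayed estimate.
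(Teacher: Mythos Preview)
Your proposal is correct and takes exactly the same approach as the paper: the paper's entire proof is the single sentence ``We may take $B$ to be $m$ consecutive points of a regular $p$-gon,'' and you have simply supplied the routine verification (sub-$p$-torality via the rotation action of $\Z_p$ and the chord-length estimate via $\sin x\approx x$) that the paper leaves implicit.
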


\begin{proof}
We may take $B$ to be $m$ consecutive points of a regular $p$-gon.
\end{proof}

\begin{theorem}
\label{sup-p-dense}
For every $\delta>0$, every set $A= \{a_1,...,a_m\}\subset \R^n$, and sufficiently large prime $p\ge p_0(A, \delta)$ there is a Euclidean sub-$p$-toral set $S = \{s_1,...,s_m\}$ in $\R^N$ of possibly larger dimension such that $\dist(a_i, s_i)< \delta$ for all $i$.
\end{theorem}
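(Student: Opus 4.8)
The plan is to adapt the Frankl--R\"odl simplex construction from \cite{frankl_rodl_simplices}. First approximate $A$ by a suitably rescaled configuration with integer coordinates; then view that configuration as a subset of an integer grid $\{0,1,\dots,L-1\}^n$; then observe that this grid is the $n$-fold Cartesian product of the $L$-point equally spaced set on a line, which Lemma~\ref{z-sub-p-toral} lets us approximate by a Euclidean sub-$p$-toral set, while Lemma~\ref{sub-p-prod} keeps the product sub-$p$-toral.

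For the reduction, after a translation assume $A\subseteq[0,D]^n$, where $D$ bounds the diameter of $A$. Fix an integer $M>2\sqrt{n}/\delta$ and round every coordinate of each $Ma_i$ to the nearest integer, obtaining points $x_i\in\{0,1,\dots,L-1\}^n=:\Gamma$ with $L=\lceil MD\rceil+1$ and $\lVert x_i-Ma_i\rVert\le\sqrt{n}/2$, so that $\lVert\tfrac1M x_i-a_i\rVert<\delta/2$. It therefore suffices to produce, for all sufficiently large primes $p$, a Euclidean sub-$p$-toral set which --- after rescaling by $M$ and a rigid motion of a common Euclidean space --- lies within $\delta/2$ of $\{\tfrac1M x_i\}_{i=1}^m$ pointwise; equivalently, a Euclidean sub-$p$-toral set within $M\delta/2$ of $\{x_i\}\subseteq\Gamma$.

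To approximate $\Gamma$, apply Lemma~\ref{z-sub-p-toral} with parameters $\epsilon$ and $L$: for every prime $p\ge p_0(\epsilon,L)$ there is a Euclidean sub-$p$-toral set $\{\hat b_0,\dots,\hat b_{L-1}\}$ with $\bigl|\dist(\hat b_i,\hat b_j)-|i-j|\bigr|<\epsilon$ for all $i,j$. Let $\hat\Gamma=\{(\hat b_{j_1},\dots,\hat b_{j_n}):0\le j_r\le L-1\}$ be the $n$-fold Cartesian product; by Lemma~\ref{sub-p-prod}, used $n-1$ times with the single prime $p$, $\hat\Gamma$ is Euclidean sub-$p$-toral. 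For the natural bijection $\Gamma\to\hat\Gamma$ one has $\bigl|\dist(\hat x,\hat y)^2-\lVert x-y\rVert^2\bigr|<3nL\epsilon$ whenever $\epsilon\le L$, hence, using $(a-b)^2\le|a^2-b^2|$ for $a,b\ge0$, $\bigl|\dist(\hat x,\hat y)-\lVert x-y\rVert\bigr|<\eta:=\sqrt{3nL\epsilon}$. Passing to the $m$ points $\hat S=\{\hat s_1,\dots,\hat s_m\}$ of $\hat\Gamma$ that match $\{x_i\}$ yields a Euclidean sub-$p$-toral set all of whose pairwise distances agree with those of $\{x_i\}$ up to $\eta$.

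It remains to turn this $\eta$-isometry into a pointwise approximation, and this is the only step that is not pure bookkeeping of the errors $\epsilon$. Both $\{x_i\}$ and $\hat S$ have diameter at most $\sqrt{n}\,L+\eta$, so a standard stability estimate for finite Euclidean configurations applies: the centred Gram matrices of the two $m$-point sets differ entrywise by $O_m(L\eta)$, and a configuration realizing a given positive semidefinite matrix depends H\"older-continuously on that matrix, so after an appropriate rigid motion $\phi$ of a Euclidean space containing $\R^n$ and the ambient space of $\hat S$ we get $\dist(x_i,\phi(\hat s_i))<\omega(m,L,\eta)$ for all $i$, with $\omega(m,L,\eta)\to0$ as $\eta\to0$. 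Since a rigid motion conjugates an isometric group action into an isometric group action, $\phi(\hat S)$ is again Euclidean sub-$p$-toral. As $M$ and $L$ depend only on $A$ and $\delta$, we may choose $\epsilon$ --- hence $\eta$ --- so small, depending only on $A$ and $\delta$, that $\omega(m,L,\eta)<M\delta/2$; then for every prime $p\ge p_0(\epsilon,L)=:p_0(A,\delta)$ the set $S:=\tfrac1M\phi(\hat S)$ is Euclidean sub-$p$-toral and satisfies $\dist(a_i,s_i)\le\lVert a_i-\tfrac1M x_i\rVert+\tfrac1M\dist(x_i,\phi(\hat s_i))<\tfrac\delta2+\tfrac\delta2=\delta$. (Alternatively, the stability step can be bypassed: the set of Lemma~\ref{z-sub-p-toral} may be taken to be $L$ consecutive vertices of a very large regular $p$-gon, which, suitably placed and scaled in $\R^2$, converge coordinatewise to $L$ equally spaced collinear points, so that $\hat\Gamma$ converges coordinatewise to $\Gamma$.)
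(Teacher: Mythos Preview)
Your proof is correct and follows essentially the same route as the paper: approximate $A$ by points of a fine grid, then approximate the one-dimensional factor of the grid by the sub-$p$-toral set of Lemma~\ref{z-sub-p-toral} and take the $n$-fold Cartesian power via Lemma~\ref{sub-p-prod}. You are in fact more careful than the paper on one point---the paper simply asserts that the rescaled set $B'=B/s$ ``approximates'' $\{0,1/s,\dots,1\}$ without addressing the passage from distance-closeness (which is all the statement of Lemma~\ref{z-sub-p-toral} gives) to pointwise closeness after a rigid motion, implicitly leaning on the explicit regular-$p$-gon construction that you spell out in your parenthetical alternative.
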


\begin{proof}
Without loss of generality assume $A\subset [0,1]^n$. Let $s$ be large (in terms of $1/\delta$). Consider the grid $G(s)=\{0,1/s,2/s,...,1\}^n\subset [0,1]^n$. Let $B'=B/s$ where the set $B$ with $|B|=s+1$ is as in Lemma~\ref{z-sub-p-toral}. The set $B'$ approximates $\{0,1/s,2/s,...,1\}$ and its Cartesian power $G'(s) = (B')^n$ approximates the grid $G(s)$,
and is sub-$p$-toral by Lemma~\ref{sub-p-prod}.

Now we may approximate $A$ by a subset $A'$ of $G(s)$ and then approximate $A'$ by the corresponding subset $S$ of the deformed grid $G'(s)$.
\end{proof}

Note that Theorem~\ref{sup-p-dense} is analogous to~\cite[Corollary~4.2]{frankl_rodl_simplices}. 
There is an analogue of~\cite[Corollary~3.2]{frankl_rodl_simplices}:

\begin{lemma}
\label{almost-regular-p-toral}
Let $S=\{s_0,\ldots, s_n\}$ be a regular simplex in $\R^n$. There exists $\delta>0$ with the following property: Any subset $A=\{a_0,\ldots, a_n\}\subset \R^n$ such that $\dist(a_i, s_i)<\delta$ for any $i$ is Euclidean sub-$p$-toral for primes $p\ge 2$.
\end{lemma}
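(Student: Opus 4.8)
The plan is to realize $A$, up to isometry, as a subset of the vertex set of a high-dimensional rectangular box, and then to use that such boxes are Euclidean sub-$p$-toral for every prime $p$ simultaneously. Throughout write $c=\dist(s_i,s_j)^2$ for the common squared side length of the regular simplex $S$, and put $D_{ij}=\dist(a_i,a_j)^2$. Since $\dist(a_i,s_i)<\delta$ for all $i$, two applications of the triangle inequality give $|\dist(a_i,a_j)-\dist(s_i,s_j)|<2\delta$, whence $|D_{ij}-c|<C\delta$ with $C$ depending only on $S$ (through its diameter). Thus the symmetric, zero-diagonal matrix $D=(D_{ij})$ is a small perturbation of the equidistant matrix $c(J-I)$, where $J$ is the all-ones $(n+1)\times(n+1)$ matrix and $I$ the identity.

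The heart of the argument is that, for $\delta$ small enough in terms of $S$, the matrix $D$ can be written as a non-negative combination of cut semimetrics on $\{0,1,\dots,n\}$, that is $D=\sum_{\alpha=1}^{M}w_\alpha\,\delta_{S_\alpha}$ with $w_\alpha>0$ and $S_\alpha\subseteq\{0,\dots,n\}$, where $\delta_{S}(i,j)=1$ if $S$ separates $i$ from $j$ and $\delta_S(i,j)=0$ otherwise. Equivalently, $D$ lies in the cut cone of $K_{n+1}$. For this it is enough to see that $c(J-I)$ is an \emph{interior} point of that cone: averaging $\delta_S$ over all subsets $S$ of $\{0,\dots,n\}$ gives $\tfrac{1}{2^{n}}\sum_{S}\delta_S=J-I$, so $c(J-I)$ is a strictly positive combination of all the cuts; and the cuts span $\R^{\binom{n+1}{2}}$, since $\delta_{\{i\}}+\delta_{\{j\}}-\delta_{\{i,j\}}$ equals twice the unit vector in coordinate $\{i,j\}$. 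A strictly positive combination of a spanning family of vectors is an interior point of their conical hull, so a whole neighbourhood of $c(J-I)$ lies in the cut cone, and in particular $D$ does once $\delta$ is small. This is a classical fact about $\ell_1$-metrics, and it is the step I expect to need the most care.

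Given such a decomposition, define $\phi(a_i)\in\R^{M}$ by letting its $\alpha$-th coordinate be $\sqrt{w_\alpha}$ if $i\in S_\alpha$ and $0$ otherwise. Then $\phi(a_i)$ and $\phi(a_j)$ differ precisely in those coordinates $\alpha$ for which $S_\alpha$ separates $i$ and $j$, so
\[
\dist(\phi(a_i),\phi(a_j))^2=\sum_{\alpha=1}^{M}w_\alpha\,\delta_{S_\alpha}(i,j)=D_{ij}=\dist(a_i,a_j)^2 ,
\]
i.e.\ $\phi$ is an isometry from $A$ onto a subset of the vertex set of the box $B=\prod_{\alpha=1}^{M}\{0,\sqrt{w_\alpha}\}$. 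Finally, a two-point set $\{0,b\}$ is Euclidean sub-$p$-toral for every prime $p$: for $p=2$ it is the full orbit of $0$ under the affine reflection $x\mapsto b-x$ of $\R^1$, and for odd $p$ it is a two-element suborbit of a regular $p$-gon of side $b$, which is an orbit of $\Z_p$ acting on $\R^2$ by rotation. Applying Lemma~\ref{sub-p-prod} repeatedly, the box $B$ is Euclidean sub-$p$-toral, and hence so is any subset of its vertex set; therefore $A$ is Euclidean sub-$p$-toral. The threshold $\delta$ obtained this way depends only on $c$ and $n$, hence only on $S$, and the very same $\delta$ and the very same box $B$ work for all primes $p$ at once.
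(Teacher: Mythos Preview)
Your proof is correct and follows essentially the same route as the paper: embed $A$ isometrically into the vertex set of a rectangular box (``brick'') and then observe that bricks are Euclidean sub-$p$-toral for every prime via Lemmas~\ref{sub-p-prod} and~\ref{sub-p-twopoint-prod}. The only difference is that the paper outsources the brick embedding to \cite[Lemma~3.1]{frankl_rodl_simplices}, whereas you supply a self-contained cut-cone argument showing that the squared-distance matrix of an almost-regular simplex lies in the interior of the cut cone.
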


\begin{proof}
By~\cite[Lemma~3.1]{frankl_rodl_simplices}, for small enough $\delta$, any $A$ that is $\delta$-close to the regular simplex $S$ is isometric to a subset of vertices of certain ``brick'' $\{0, t_1\}\times \{0, t_2\}\times \dots \{0, t_N\}$ with $N = \binom{n}{2}$. A brick is Euclidean sub-$p$-toral for every $p\ge 2$ by Lemma~\ref{sub-p-twopoint-prod}.
\end{proof}

Finally, like in~\cite[Theorem~5.1]{frankl_rodl_simplices} we conclude:

\begin{theorem}
\label{s-sub-p-toral}
Any affinely independent set $A\subset \R^n$ is Euclidean sub-$p$-toral for sufficiently large $p\ge p_0(A)$.
\end{theorem}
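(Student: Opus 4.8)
I would follow the Frankl--R\"odl scheme of~\cite{frankl_rodl_simplices}, the idea being to realise the \emph{entire} squared-distance vector of $A$ exactly inside a product of regular $p$-gons. Write $A=\{a_0,\dots,a_n\}$, put $N=\binom{n+1}{2}$, and let $\mathbf d=(d_{ij}^2)_{0\le i<j\le n}\in\R^N$ where $d_{ij}=|a_i-a_j|$. The building block is the following converse to Lemma~\ref{sub-p-prod}: if $P_1,\dots,P_m$ are regular $p$-gons of radii $\rho_1,\dots,\rho_m>0$, each carrying the cyclic rotation action of $\Z_p$, and for each $l$ we choose labels $\psi^{(0)}_l,\dots,\psi^{(n)}_l\in\Z_p$, then the points $w^{(i)}=\bigl(\text{vertex }\psi^{(i)}_1\text{ of }P_1,\dots,\text{vertex }\psi^{(i)}_m\text{ of }P_m\bigr)$ form a suborbit of $\Z_p^m$ acting coordinatewise on $\bigoplus_{l}\R^2$, and
\[
|w^{(i)}-w^{(j)}|^2=\sum_{l=1}^m\rho_l^2\,a_l(i,j),\qquad a_l(i,j):=4\sin^2\!\Bigl(\tfrac{\pi}{p}\bigl(\psi^{(i)}_l-\psi^{(j)}_l\bigr)\Bigr)\ge 0 .
\]
So it suffices to produce, for all large primes $p$, finitely many ``$p$-gon patterns'' $\mathbf a_l=(a_l(i,j))_{ij}\in\R^N$ together with weights $\rho_l^2>0$ satisfying $\sum_l\rho_l^2\mathbf a_l=\mathbf d$; the resulting $(n+1)$-point set $\{w^{(i)}\}$ is then isometric to $A$ (a finite point configuration is determined up to congruence by its squared-distance matrix) and Euclidean sub-$p$-toral by construction. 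The bricks used in Lemma~\ref{almost-regular-p-toral} are exactly the special case in which every label $\psi^{(i)}_l$ lies in $\{0,1\}$, so that each $\mathbf a_l$ is a multiple of a cut vector; a ``flat'' simplex genuinely needs patterns that are not of this form.

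To find such patterns I would invoke the geometry of the cone $\mathcal N\subset\R^N$ of all squared-distance vectors of $(n+1)$-point Euclidean configurations. It is the image of the positive semidefinite cone under the linear isomorphism carrying a Gram matrix (based at $a_0$) to the corresponding distance vector; hence $\mathcal N$ is a pointed, full-dimensional, closed convex cone whose interior consists precisely of the affinely independent configurations (positive-definite Gram matrix) and whose extreme rays are precisely the squared-distance vectors of configurations lying on a single line. Since $A$ is affinely independent, $\mathbf d\in\inte\mathcal N$. The crucial link is that every one-dimensional extreme ray is a limit of positive multiples of $p$-gon patterns: placing $n+1$ collinear points in a short arc of a regular $p$-gon and using $4\sin^2 x=(2x)^2+O(x^4)$ gives, for a fixed small arc, $\mathbf a_l(p)\to c_l\mathbf b_l$ as $p\to\infty$ for a constant $c_l>0$. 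This is the same device that produces Lemma~\ref{z-sub-p-toral} and Theorem~\ref{sup-p-dense}.

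Now assemble. Because $\mathbf d\in\inte\mathcal N$, one can write $\mathbf d=\sum_{l=1}^N\beta_l\mathbf b_l$ with all $\beta_l>0$ and with $\mathbf b_1,\dots,\mathbf b_N$ linearly independent extreme rays of $\mathcal N$ (Carath\'eodory, then a small perturbation of the chosen rays, which is possible precisely because $\mathbf d$ is interior). Fix them. For a prime $p$ pick, for each $l$, a $p$-gon pattern $\mathbf a_l(p)$ approximating $c_l\mathbf b_l$ as above. For $p$ large the vectors $\mathbf a_1(p),\dots,\mathbf a_N(p)$ still form a basis of $\R^N$, so the linear system $\sum_l x_l\,\mathbf a_l(p)=\mathbf d$ has a unique solution $x(p)$, and $x(p)\to(\beta_1/c_1,\dots,\beta_N/c_N)$, a vector with strictly positive entries. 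Hence there is $p_0(A)$ such that $x(p)$ is entrywise positive for every prime $p\ge p_0(A)$; taking $\rho_l^2=x_l(p)$ and reading off the $p$-gons, radii and labels gives a Euclidean sub-$p$-toral copy of $A$.

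The one genuinely quantitative step, which I expect to be the main obstacle, is the last one: verifying that the unique solution of the perturbed linear system keeps all coordinates positive. This requires a controlled estimate on $\mathbf a_l(p)-c_l\mathbf b_l$ --- an $O(1/p)$ term from rounding labels to integers and an $O(\kappa^2)$ term from the $\sin^2$-versus-square discrepancy over an arc of size $\kappa$ --- compared against the two fixed positive quantities $\min_l\beta_l$ and the smallest singular value of $[\mathbf b_1\ \cdots\ \mathbf b_N]$, so that no entry changes sign. Two routine ancillary points are the description of the interior and of the extreme rays of $\mathcal N$, and the easy observation (the only direction needed) that a product of regular $p$-gons is a single $\Z_p^m$-orbit, so every subset of it is a Euclidean suborbit; the converse statement --- that, up to a translation, any Euclidean sub-$p$-toral configuration lies in such a product, since a finite group of affine isometries fixes a point and the reduction of Section~\ref{dvor-sec} applies --- is not needed for the argument above.
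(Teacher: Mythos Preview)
Your approach is sound and genuinely different from the paper's. The paper does not argue via the cone of squared-distance vectors at all; it simply invokes Frankl--R\"odl's embedding (\cite[Theorem~5.1]{frankl_rodl_simplices}): any affinely independent $A$ embeds isometrically into a product $S_1\times S_2$, where $S_1$ is an almost-regular simplex (hence sits in a brick and is sub-$p$-toral for every $p\ge 2$ by Lemma~\ref{almost-regular-p-toral}) and $S_2$ is a configuration approximating $A$ (hence sub-$p$-toral for large $p$ by Theorem~\ref{sup-p-dense}); Lemma~\ref{sub-p-prod} then finishes in one line. Your argument replaces this two-factor product by a direct $N$-term realisation of the squared-distance vector as a positive combination of $p$-gon patterns, exploiting that affine independence places $\mathbf d$ in the interior of the cone $\mathcal N$ and that the extreme rays of $\mathcal N$ are the collinear configurations. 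This is more self-contained --- it does not import the Frankl--R\"odl embedding lemma --- and makes the underlying convex-geometric mechanism explicit; the paper's proof is shorter precisely because that mechanism has already been packaged inside~\cite{frankl_rodl_simplices}.

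Two technical points to tighten, neither fatal. First, ``Carath\'eodory, then a small perturbation of the chosen rays'' is not quite a proof that $\mathbf d$ is a strictly positive combination of $N$ \emph{linearly independent} extreme rays: Carath\'eodory may return fewer than $N$ terms, and appending rays with small positive weight need not preserve independence. A clean fix is to use the $\mathrm{GL}_n$-automorphism $M\mapsto PMP^{\top}$ of the PSD cone (which preserves rank-one matrices) to reduce to $M=I$, and then exhibit an explicit tight frame of $N$ unit vectors with linearly independent outer products. Second, for a \emph{fixed} arc size $\kappa$ one has $\mathbf a_l(p)\to\bigl(4\sin^2(\tfrac{\kappa}{2}(t_i^{(l)}-t_j^{(l)}))\bigr)_{ij}$, which is close to but not a scalar multiple of $\mathbf b_l$; so either let $\kappa\to 0$ with $p\kappa\to\infty$, or (as your error bookkeeping at the end suggests) fix $\kappa$ small enough that these limit vectors already form a basis with $\mathbf d$ a strictly positive combination, and only then send $p\to\infty$. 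With either choice your continuity argument for the positivity of $x(p)$ goes through.
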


\begin{proof}
Following~\cite[Theorem~5.1]{frankl_rodl_simplices}, we observe that $A$ can be isometrically embedded into a product of an almost regular simplex $S_1$ and a simplex $S_2$ that approximates $A$ by Lemma~\ref{sup-p-dense}.
\end{proof}

Also, note that the result of Theorem~\ref{s-sub-p-toral} is almost in contradiction with Theorem~\ref{suborbit3-failure}. 
It is a consequence of the difference between Euclidean and spherical suborbits. Still, we are able to produce a Knaster-like 
consequence of Theorem~\ref{s-sub-p-toral}:

\begin{theorem}
\label{eu-weak-knaster}
Assume $X$ is an affinely independent subset of $\R^k$ and $d$ is a positive integer. Then there exists $n = n(X, d)$ such that for any continuous map $f\colon \R^n\to \R^d$ there exists an isometric image $X'\subset\R^n$ of $X$ such that $f|_{X'} = \const$.
\end{theorem}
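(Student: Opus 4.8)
The plan is to combine Theorem~\ref{s-sub-p-toral} with Theorem~\ref{knaster-orbits}, bridging the gap between \emph{Euclidean} and \emph{spherical} suborbits by passing to a very large sphere on which the relevant ball of $\R^n$ looks essentially flat. First I would apply Theorem~\ref{s-sub-p-toral} to obtain a prime $p$ and a $p$-torus $G = \Z_p^\alpha$ acting on some $\R^m$ by isometries, together with an orbit $Gv$ containing an isometric copy of $X$; since $G$ acts by isometries (not necessarily fixing the origin), after conjugating by a translation and enlarging $m$ we may assume the action is orthogonal, i.e. it fixes the origin and $Gv$ lies on the sphere $S(m-1)$ of radius $|v|$ centered at the origin — indeed, a finite group of isometries of $\R^m$ has a fixed point (the centroid of any orbit), which we move to the origin.

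Next I would address the mismatch between a map $f\colon \R^n\to\R^d$ on \emph{all} of $\R^n$ and Theorem~\ref{knaster-orbits}, which concerns maps on a sphere. The key observation is that $X$ is bounded, say contained in a ball of radius $R$; for any $\rho>0$ the sphere $\S^{N-1}(\rho)$ of radius $\rho$ in $\R^N$ contains, near any of its points, a region that is $C(R)/\rho$-close to an $(N-1)$-dimensional affine ball of radius $R$, so as $\rho\to\infty$ a suitable piece of the large sphere approximates flat $\R^{N-1}$ on the scale of $X$. More precisely, I would take $N$ at least as large as the bound $d(|G|-1)+k'$ from Theorem~\ref{knaster-orbits} (where $k'$ is the dimension of the linear span of $Gv$), realize $\R^n\hookrightarrow\R^N$ as a coordinate subspace, and for a large radius $\rho$ consider the map $g\colon \S^{N-1}(\rho)\to\R^d$ obtained by composing $f$ with a fixed diffeomorphism (gnomonic or orthogonal projection) from a hemisphere of $\S^{N-1}(\rho)$ onto $\R^{N-1}\supset\R^n$. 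Scale the whole orbit $Gv$ so that its linear span, after the inclusion $\R^{k'}\hookrightarrow\R^{N-1}$, sits inside a tiny ball of radius $\eta$ around the north pole; then $Gv$ (rescaled) \emph{is} an orbit of the $p$-torus action on $\S^{N-1}$, so Theorem~\ref{knaster-orbits} yields a rotation $\sigma$ with $g$ constant on $\sigma(Gv)$, hence constant on some isometric copy $\tilde X$ of the rescaled $X$ sitting on the sphere near the pole.

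The final step is to undo the approximations. Because $\tilde X$ lies in an $\eta$-neighborhood of the north pole, its image under the projection $\S^{N-1}(\rho)\dashrightarrow\R^{N-1}$ is a set $X'\subset\R^n$ that differs from an honest \emph{isometric} copy of $X$ by an error that is $O(\eta^2/\rho + \text{(scaling factor)})$; and $f$ is constant on this $X'$ by construction of $g$. A clean way to make $X'$ \emph{exactly} isometric to $X$ is to avoid the projection distortion altogether: instead use the orthogonal projection $\pi\colon \S^{N-1}(\rho)\to\R^{N-1}$ and note that on a cap of intrinsic radius $O(R)$ around the pole, $\pi$ restricted to the image of the (unscaled) orbit is $(1+O(R^2/\rho^2))$-bi-Lipschitz; alternatively, and more robustly, invoke a compactness/limit argument — take $\rho\to\infty$, pull back everything to a fixed ball in $\R^n$, and extract a limiting configuration on which $f$ is constant (using continuity of $f$) and which is an exact isometric copy of $X$. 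I expect this last reconciliation — turning an approximate isometric copy on the sphere into an exact isometric copy in $\R^n$ on which $f$ is still constant — to be the main technical obstacle; the cleanest route is the limiting argument, exploiting that $f$ is uniformly continuous on compacta and that the space of isometric embeddings of the finite set $X$ into a fixed ball is compact, so a sequence of $\delta_\rho$-approximate monochromatic copies with $\delta_\rho\to 0$ subconverges to a genuine monochromatic copy.
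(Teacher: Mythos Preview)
Your opening moves match the paper: invoke Theorem~\ref{s-sub-p-toral} to enlarge $X$ to a full $p$-torus orbit $Gv$, translate the centroid to the origin so the action is orthogonal, and observe that $Gv$ then lies on a sphere of radius $r=|v|$ about the origin. But from this point you have missed the one-line step that finishes the argument: simply restrict $f$ to this sphere. Embedding $\R^m\hookrightarrow\R^n$ with $n-m\ge d(|G|-1)$ and letting $G$ act trivially on the extra coordinates, one applies Theorem~\ref{knaster-orbits} directly to $f|_{\S^{n-1}(r)}$ (equivalently, to $x\mapsto f(rx)$ on the unit sphere) and obtains an isometric copy of $Gv$, hence of $X$, on which $f$ is constant. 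No projection, approximation, or limit is needed, because an isometric copy on the sphere is already an isometric copy in $\R^n$ --- all distances are ambient Euclidean distances.

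Your large-sphere-and-projection detour, besides being unnecessary, has a real gap. You assert that the copy $\tilde X=\sigma(Gv)$ produced by Theorem~\ref{knaster-orbits} ``lies in an $\eta$-neighborhood of the north pole'', but the theorem gives no control whatsoever over where $\sigma$ sends the orbit; it could land anywhere on the sphere. If it lands far from the pole, your projection to $\R^{N-1}$ either distorts wildly or is undefined (you built $g$ from a projection defined only on a hemisphere, so on the rest of the sphere $g$ must be extended arbitrarily and no longer encodes $f$). Consequently the projected approximate copies need not remain in any bounded region of $\R^n$, and the compactness your limiting argument relies on is unavailable. The rescaling compounds this: after shrinking $Gv$ to fit in a tiny cap, Theorem~\ref{knaster-orbits} returns a copy of the \emph{shrunken} $X$, not of $X$ itself.
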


\begin{proof}
By Theorem~\ref{s-sub-p-toral}, after increasing the dimension $k$ and the set $X$ we may assume that $X\subset \R^k$ is an orbit of a linear $G$-action on $\R^k$, where $G$ is a $p$-torus with $|G|=|X|=q=p^\alpha$. 

Now we can embed $\R^k$ into $\R^n$ so that $n-k\ge d(q-1)$ and let $G$ act on the complement $\R^{n-k}$ trivially. Obviously, the $G$-orbit $X$ must lie on a sphere $\S^{n-1}$ centered at the origin and we may apply Theorem~\ref{knaster-orbits} to $X$ and $f|_{\S^{n-1}}$. For completeness, we outline the rest of the proof below, which is also a proof for Theorem~\ref{knaster-orbits}.

For a given $\R^n$ with a continuous map $f:\R^n\to \R^d$, consider all possible isometric images of the set $X$ in $\R^n$ with mass center (and the center of the action of $G$) at the origin. The configuration space $\mathcal C$ of all possible images $X'=\phi(X)$ is isomorphic to the Stiefel manifold $V_{n, k}$, that is the space of all orthonormal $k$-frames in $\mathbb R^n$, since every isometric embedding of $X$ can be uniquely extended to an isometric embedding of its linear span. This space  $\mathcal C$ has the obvious action of $G$ by permuting the points of $X$, that is composing the isometry $\phi\in\mathcal C$ with $g^{-1}$ from the right for any $g\in G$. 

The crucial thing about the Stiefel manifold $V_{n,k}$ is that it is $(n-k-1)$-connected, indeed, adding the frame vectors one by one we represent it as an iterated bundle with all fibers spheres of dimension at least $n-k$. Now construct the $G$-equivariant map
$$
\tilde f\colon \mathcal C \mapsto (\R^d)^q
$$
by sending $X' = \phi(X)\in \mathcal C$ to the $q$-tuple $\{f(\phi(x))\}_{x\in X}$. 

Using the standard Borsuk--Ulam-type argument for a $p$-torus $G$ (see~\cite{volovikov_stiefel}, for example) and the connectivity of $\mathcal C$, we conclude that for $n-k\ge d(q-1)$ $\tilde f$ must map some $X'\in \mathcal C$ to the $G$-invariant part of the representation $(\R^d)^q$, which is the $d$-fold direct sum of the group algebra $\R[G]$ with itself. The invariant part of the group algebra $\R[G]$ is one-dimensional and consists of constant functions on $G$. So we obtain that the map $f$ is constant on some $X'$.
\end{proof}

\section{Three-point suborbits on the sphere}
\label{pos-sph-sec}
In this section we return to spherical $p$-toral suborbits and study three-point subsets $X\subset \S^2$. 

\begin{theorem} Let $X$ be a three-point subset of $\S^2$. 
\label{three-sph-sub-p}
\begin{enumerate}
\item\label{three-sph-parti} If the affine span of $X$ does not contain the origin then $X$ is spherical sub-$p$-toral for all sufficiently large primes $p$.
\item\label{three-sph-partii}  If $X\subset \S^1$ then $X$ is spherical sub-$p$-toral if and only if either $X$ is a right-angled triangle, or $X$ is a suborbit
of the standard $\Z_p$ action on the circle for some $p$.
\end{enumerate}
\end{theorem}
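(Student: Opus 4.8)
For a three-point set $X\subset\S^2$, being spherical sub-$p$-toral amounts to the existence of an orthogonal action of a $p$-torus $G$ on some $\R^n$, a unit vector $v$, and elements $g_1,g_2,g_3\in G$ for which $\{g_1v,g_2v,g_3v\}$ is congruent to $X$. Since three points of $\S^{n-1}$ form a rigid configuration and the foot of the perpendicular from the origin to their affine span is automatically their circumcentre, the only thing to be arranged is that the three pairwise distances match those of $X$; the circumradius then takes care of itself. In part~(\ref{three-sph-parti}) the assumption that $\conv X$ misses the origin is exactly $\mathcal R:=\mathcal R(X)<1$, so that the trivial summand of $v$ may be given positive length --- this is the ``room'' we will use; in part~(\ref{three-sph-partii}) one has $\mathcal R=1$ and there is no such room.

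\emph{Part~(\ref{three-sph-parti}).} I would begin with the rational triangles. For every $r<1$ and prime $p$, the $p$ equally spaced points of the circle of radius $r$ sitting in $\R^2\oplus\R\subset\R^n$ at height $\sqrt{1-r^2}$ form a $\Z_p$-orbit on $\S^{n-1}$; hence every triangle with circumradius $r$ whose three arcs are integer multiples of $2\pi/p$ is spherical sub-$p$-toral. Given an arbitrary $X$ with $\mathcal R<1$, take $p$ large and a rational triangle $T_0$ of circumradius $\mathcal R$ whose arcs approximate those of $X$, and realise $T_0$ as above, using one two-dimensional representation for the ``radius'' component $v_1$ (with $|v_1|^2=\mathcal R^2$) and the trivial summand for the ``height'' component $v_0$. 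Now deform $T_0$ into $X$: append several further two-dimensional representations of the same $\Z_p$ (i.e.\ pass to a $\Z_p^\alpha$-orbit) carrying small components $v_2,\dots,v_m$, decrease $|v_1|^2$ slightly, and keep $|v|=1$. Once the added frequencies and the three group elements are fixed, the three distance equations become a small linear system in the squared lengths $|v_1|^2,\dots,|v_m|^2$ (with $|v_1|^2$ near $\mathcal R^2$ and the others near $0$), whose right-hand side is the small discrepancy between $X$ and $T_0$; one must choose $p$, the frequencies and the group elements so that it has a solution with all $|v_j|^2\ge 0$ and $\sum_j|v_j|^2\le 1$, after which $|v_0|^2$ absorbs the remaining mass (this uses $\mathcal R<1$). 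Note that the Frankl--R\"{o}dl product constructions behind Theorem~\ref{all-p-triangles} cannot be used directly, since three consecutive vertices of a large regular polygon force the orbit onto a sphere of radius far larger than $1$. The crux is the feasibility of this small linear system --- showing the correcting weights can be made nonnegative, not merely small.

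\emph{Part~(\ref{three-sph-partii}), ``if''.} A right triangle inscribed in $\S^1$ is formed by three of the four vertices of a rectangle inscribed in $\S^1$, and in suitable coordinates such a rectangle, $\{(\pm a,\pm b):a^2+b^2=1\}$, is the orbit of $(x,y)\mapsto(\pm x,\pm y)$, a faithful orthogonal action of $\Z_2^2$ on $\R^2$; so right triangles are spherical sub-$2$-toral. A triangle all of whose arcs are rational multiples of $2\pi$ is, by definition, a suborbit of the standard $\Z_p$-action on $\S^1$ for suitable $p$.

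\emph{Part~(\ref{three-sph-partii}), ``only if''.} Suppose $X\subset\S^1$ lies in an orbit $Gv$ of an orthogonal $p$-torus action with $|v|=1$; translating, take $g_1=0$ and $G=\langle g_2,g_3\rangle$. Since $X$ has circumradius $1$, its affine span passes through the origin, so the trivial summand of $v$ vanishes; writing $\psi_i\in(0,2\pi)$ for the arc of $X$ opposite $g_iv$, the circumcentre $0$ has barycentric coordinates $(\sin\psi_1:\sin\psi_2:\sin\psi_3)$, whence $\sum_i\sin\psi_i\,g_iv=0$ in $\R^n$. If $p=2$, write the orbit as $\{(\epsilon_1(g)v_1,\dots,\epsilon_m(g)v_m)\}$ over the nontrivial characters $\epsilon_j$ and let $S_1$ (resp.\ $S_2$) be the set of $j$ with $\epsilon_j(g_1-g_2)=-1$ (resp.\ $\epsilon_j(g_2-g_3)=-1$); then the three squared distances of $X$ equal $4\ell^2(S_1),\,4\ell^2(S_2),\,4\ell^2(S_1\triangle S_2)$, where $\ell^2(S):=\sum_{j\in S}|v_j|^2$. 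Solving for $\ell^2(S_1\setminus S_2),\ \ell^2(S_2\setminus S_1),\ \ell^2(S_1\cap S_2)$ shows that these must be nonnegative --- which forces $X$ to be non-obtuse --- and that they sum to one-eighth of the sum of the squared side lengths, hence $\le 1$; but for a non-obtuse triangle inscribed in the unit circle that sum lies in $[8,9]$ and equals $8$ only for right triangles, so $X$ is right-angled. If $p$ is odd, $-1$ is not a power of $\zeta=e^{2\pi i/p}$, so no two orbit points are antipodal and the goal is to show that the arcs of $X$ are rational multiples of $2\pi$. Projecting $\sum_i\sin\psi_i\,g_iv=0$ onto each irreducible summand on which $v$ is nonzero gives --- with the \emph{same} coefficients $(\sin\psi_1,\sin\psi_2,\sin\psi_3)$ for every such summand --- a real linear relation $\sin\psi_1+\sin\psi_2\,\zeta^a+\sin\psi_3\,\zeta^b=0$ with $a,b\in\Z/p$; the cases $a=b$, $a=0$, $b=0$ would make a power of $\zeta$ a negative real, which is impossible, so each active summand produces an honest dependence among three \emph{distinct} $p$-th roots of unity, i.e.\ a rational triangle whose circumcentre has the same barycentric direction as that of $X$. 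Feeding this into the three distance equations, which exhibit $(\cos\psi_1,\cos\psi_2,\cos\psi_3)$ as a convex combination of the corresponding vectors of those rational triangles, ought to force the $\psi_i$ to be rational multiples of $2\pi$. Disentangling these root-of-unity relations together with the convexity constraint is the delicate step here, a variant of the Leader--Russell--Walters argument~\cite{lrw_quadrilateral} underlying Theorem~\ref{thm_central_suborbits}, and I expect it to be the main obstacle in this part.
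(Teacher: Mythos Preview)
Your plan is genuinely different from the paper's proof, and it is instructive to see where the two diverge.

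The paper's central observation, which you do not isolate, is this: parametrise a triple on a sphere by its vector of squared pairwise distances, and note that if $(A_1,B_1,C_1)\subset\S^{k_1-1}$ and $(A_2,B_2,C_2)\subset\S^{k_2-1}$ are sub-$p$-toral, then for any $\lambda_1^2+\lambda_2^2=1$ the triple $\lambda_1(A_1,B_1,C_1)\oplus\lambda_2(A_2,B_2,C_2)\subset\S^{k_1+k_2-1}$ is again sub-$p$-toral and its squared-distance vector is the convex combination $\lambda_1^2(\cdot)+\lambda_2^2(\cdot)$. Hence the sub-$p$-toral triangles form a convex set in these coordinates; together with the trivial one-point orbit this gives exactly $R_p:=\conv(\{0\}\cup\Sigma_p)$, where $\Sigma_p$ denotes the $\Z_p$-suborbits on $\S^1$. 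Part~(\ref{three-sph-parti}) is then immediate from density of $\Sigma_p$ in the circumradius-$1$ surface $\Sigma$, and part~(\ref{three-sph-partii}) reduces to finding $R_p\cap\Sigma$, which the paper handles by writing down the cubic equation for $\Sigma$ and classifying the straight lines on it. Your perturbation scheme for~(\ref{three-sph-parti}) --- append summands with small weights and solve a linear system in the $|v_j|^2$ --- is precisely this convexity in disguise; once you phrase it that way, the ``feasibility'' worry evaporates and no delicate sign analysis is needed.

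For part~(\ref{three-sph-partii}) your route is different and, in fact, cleaner than you give it credit for. Your $p=2$ computation is correct (the identity $\sin^2\!A+\sin^2\!B+\sin^2\!C=2+2\cos A\cos B\cos C$ pins the non-obtuse case to the right triangles). For odd $p$, your barycentric relation $\sum_i\sin\psi_i\,g_iv=0$ projected to any nontrivial summand with $v_j\neq 0$ gives $\sin\psi_1+\sin\psi_2\,\zeta^{a}+\sin\psi_3\,\zeta^{b}=0$; since $\sum_i\sin\psi_i=4\prod_i\sin(\psi_i/2)>0$, the exponents cannot all coincide, and the argument you sketch then shows $1,\zeta^a,\zeta^b$ are distinct. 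But now the triangle $(1,\zeta^a,\zeta^b)$ on the unit circle has the origin with the \emph{same} barycentric coordinates as $X$, and two triangles inscribed in the unit circle whose centres have the same barycentrics are congruent --- so $X$ already has rational arcs. The ``convexity of cosines'' step you anticipate as the main obstacle is unnecessary; the argument terminates one line earlier. Thus your approach to~(\ref{three-sph-partii}) is a valid alternative to the paper's algebraic-geometry computation, trading the classification of lines on a cubic surface for an elementary rigidity fact about inscribed triangles.
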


From this theorem and Theorem~\ref{knaster-orbits} we immediately obtain the following Knaster-type result:
\begin{corollary}
For any three-point set $X\subset \S^2$, whose affine span does not contain the origin, and a positive integer $d$ there exists $N = n(X) + d(q(X) - 1)$ with the 
following property: If $f\colon \S^{N-1}\to \R^d$ is a continuous map then for some isometric copy $X'\subset \S^{N-1}$ of $X$ the map $f$ is constant on $X'$.
\end{corollary}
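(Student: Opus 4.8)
The plan is to deduce the corollary by feeding the first part of Theorem~\ref{three-sph-sub-p} into Theorem~\ref{knaster-orbits}. The one point needing care is that the topological theorem must be applied to the ambient \emph{orbit} of the $p$-torus, not directly to the three-point suborbit, and its conclusion then restricted back to $X$.

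First I would invoke part~(1) of Theorem~\ref{three-sph-sub-p}: since the affine span of $X$ does not contain the origin, $X$ is spherical sub-$p$-toral for all sufficiently large primes $p$, so fix one such $p$. Unwinding the definition, this produces a $p$-torus $G = (\Z_p)^\alpha$, a linear orthogonal action of $G$ on $\R^{n(X)}$, a standard isometric inclusion $\S^2 \hookrightarrow \S^{n(X)-1}$, and a unit vector $v\in\S^{n(X)-1}$ with $X \subseteq Gv$. Set $q(X) := |G|$ and $X^{\ast} := Gv$; then $X^{\ast}$ is an orbit of an orthogonal $p$-torus action on $\S^{n(X)-1}$, and $X \subseteq X^{\ast}$.

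Next, put $N := n(X) + d(q(X)-1)$, so that $N \ge d(|G|-1) + n(X)$, and apply Theorem~\ref{knaster-orbits} to the orbit $X^{\ast}\subset \S^{n(X)-1}$ (playing the role of ``$X\subset\S^{k-1}$'' there, with $k=n(X)$), to the dimension $N$, and to the given continuous map $f\colon\S^{N-1}\to\R^d$. This yields an isometric copy of $X^{\ast}$ inside $\S^{N-1}$ on which $f$ is constant; the image $X'$ of $X$ under the realising isometry is then an isometric copy of $X$ contained in $\S^{N-1}$ with $f|_{X'}$ constant, which is exactly the assertion.

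I do not expect a genuine obstacle here: once Theorems~\ref{three-sph-sub-p} and~\ref{knaster-orbits} are available, the argument is essentially bookkeeping. The only subtlety worth flagging is that the dimension bound in Theorem~\ref{knaster-orbits} is controlled by $|G|=q(X)$ and by $n(X)$ rather than by the three points of $X$ themselves --- which is why $N$ has to be taken as large as $n(X)+d(q(X)-1)$ --- and that $n(X)$ and $q(X)$ are simply whatever values the proof of Theorem~\ref{three-sph-sub-p} delivers for the chosen prime $p$.
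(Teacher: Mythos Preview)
Your proposal is correct and matches the paper's approach exactly: the paper simply states that the corollary follows immediately from Theorem~\ref{three-sph-sub-p}(i) and Theorem~\ref{knaster-orbits}, and you have faithfully unwound that deduction, including the point that one applies Theorem~\ref{knaster-orbits} to the full orbit $X^{\ast}=Gv$ and then restricts to the suborbit $X$.
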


\begin{proof}[Proof of Theorem~\ref{three-sph-sub-p}]
For the purpose of this proof a ``triangle'' is just a triple of points. We allow some of the points to be equal (these are degenerate triangles). 
To each triangle $ABC$ we associate a triple $(X,Y,Z)$ where $X=\dist(A,B)^2$, $Y=\dist(A,C)^2$, and $Z=\dist(B,C)^2$.
Let $\Sigma$ be the set of the triples that correspond to the triangles with circumradius equal to~$1$. Let $R=\conv(\{0\}\cup \Sigma)$. 
The triples in $R$ correspond to the triangles with circumradius at most~$1$. We shall abuse the notation and identify elements
of $R$ with corresponding triangles. 

The topological boundary of $R$ consists of a union of $\Sigma$ and three line segments $[0,(4,4,0)]$, $[0, (4,0,4)]$ and $[0, (0,4,4)]$. 
Indeed, only the triangles of circumradius $1$ and the degenerate triangles are on the boundary of $R$. 
Let $\Sigma_p\subset \Sigma$ be the set of triangles that are suborbits of the standard action of $\Z_p$
on $\S^1$. Note that $\Sigma_2$ includes the degenerate triangles $(4,4,0)$, $(4,0,4)$ and $(0,4,4)$ 
that correspond to pairs of antipodal points, but $0\not\in \Sigma_p$ because $0\not\in \Sigma$.
Let $R_p=\conv (\{0\}\cup \Sigma_p)$. We claim that all the triangles in $R_p$ are spherical sub-$p$-toral.

It suffices to prove that the set of spherical sub-$p$-toral triangles is a convex subset of $R$.
Suppose $(A_1,B_1,C_1)\subset \S^{k_1-1}$ and $(A_2,B_2,C_2)\subset \S^{k_2-1}$ are suborbits of $\Z_p^{\alpha_1}$ and $\Z_p^{\alpha_2}$ respectively. Suppose further that $\lambda_1,\lambda_2\in\R$ satisfy $\lambda_1^2+\lambda_2^2=1$. Let 
\[
   A=\lambda_1 A_1\oplus \lambda_2 A_2,\quad  B=\lambda_1 B_1\oplus \lambda_2 B_2,\quad C=\lambda_1 C_1\oplus \lambda_2 C_2. 
\]
The triangle $(A,B,C)$ lies on the sphere $\S^{k_1+k_2-1}$, and is a suborbit of the natural action of $\Z_p^{\alpha_1}\times \Z_p^{\alpha_2}$.
We see that the triple of squares of the side-lengths $(|AB|^2,|AC|^2,|BC|^2)$ is a convex
combination of $(|A_1B_1|^2,|A_1C_1|^2,|B_1C_1|^2)$  and $(|A_2B_2|^2,|A_2C_2|^2,|B_2C_2|^2)$ with coefficients $\lambda_1^2$ and $\lambda_2^2$. As $\lambda_1,\lambda_2$ were arbitrary,
it follows that the set of spherical sub-$p$-toral triangles is indeed a convex subset of~$R$.
We note that the same argument shows that $R$ itself is convex.

We are ready to prove part (\ref{three-sph-parti}) of the theorem. Let $P=(X,Y,Z)\in R$ be an interior point of $R$. We claim that for all sufficiently large $p$, the point $P$ is
in $\conv(\{0\}\cup \Sigma_p)$. Indeed, we can pick four rays emanating from $P$, not
all four in the same halfspace. Let $P_1,\dotsc,P_4$ be the intersections of these rays
with~$\Sigma$. Then $P$ is contained in the interior of $\conv \{P_1,\dotsc,P_4\}$.
As the points of $\Sigma_p$ are getting denser and denser in $\Sigma$, if $p$ is large,
there are points $P_1',\dotsc,P_4'\in \Sigma_p$ approximating $P_1,\dotsc,P_4\in \Sigma$ such that $P\in\conv\{P_1',\dotsc,P_4'\}$.
As every non-degenerate triangle of circumradius strictly less than $1$ is in the interior of $R$, part (\ref{three-sph-parti}) follows.

The proof of part (\ref{three-sph-partii}) relies on the reversal of the argument above. Every action of $\Z_p^n$ decomposes into the product of two-dimensional actions as in proof of Theorem~\ref{suborbit3-failure}.
Therefore, every orbit of $\Z_p^n$ is of the form $a_1 S_1\oplus \dotsb \oplus a_n S_n$
where $S_1,\dotsc,S_n$ are regular $p$-gons, and $a_1^2+\dotsb+a_n^2=1$. If
$A=(A_1,A_2,\dotsc,A_n)$, $B=(B_1,B_2,\dotsc,B_n)$, $C=(C_1,C_2,\dotsc,C_n)$ are three
points in such an orbit, then $(|AB|^2,|AC|^2,|BC|^2)$ is the convex combination of 
$\bigl\{(|A_iB_i|^2,|A_iC_i|^2,|B_iC_i|^2)\bigr\}_{i=1}^n$. In other words,
$R_p=\conv(\{0\}\cup\Sigma_p)$ is precisely the set of all spherical sub-$p$-toral sets.

It thus remains to explicitly describe the set $R_p\cap \Sigma$. 
This step hinges on an explicit formula for $\Sigma$. It is easy
to show from the formula for the circumradius of a triangle, that all the points $(X,Y,Z)$ of $\Sigma$
satisfy the relation
\begin{equation}
\label{square-sidelength}
XYZ + X^2+Y^2+Z^2 = 2XY + 2YZ + 2ZX.
\end{equation}
A point $P$ may belong to $R_p\cap \Sigma$ only if $P\in \Sigma_p$ or $P$ is in a convex hull of several points of $\Sigma$ distinct from $P$.
Since $R$ is convex, the latter might happen only if $P$ lies on a line segment that is wholly contained in $\Sigma$.
Let $V$ be the algebraic variety defined by the equation~\eqref{square-sidelength}. As $\Sigma\subset V$, a line segment on $\Sigma$
belongs to a line on $V$. As $XYZ$ is the only cubic term of \eqref{square-sidelength} it follows that every line on $V$ is parallel to one
of the coordinate planes. Pick such a line, and assume without loss of generality that $X=X_0$ on the line. Then \eqref{square-sidelength}
after some transformations reads:
\[
(Y-2)^2 + (Z-2)^2 + (X_0-2)(Y-2)(Z-2) + (X_0-2)^2-4 = 0,
\]
and we conclude that the only cases when a straight line satisfies this is $X_0=0$ and $Y=Z$, or $X_0=4$ and $Y+Z=4$. 
The former case corresponds to the degenerate triangles consisting of two points. The latter case corresponds to
the right-angled triangles inscribed into $\S^1$. As the right-angled triangles are suborbits of a $\Z_2^2$ action,
the proof of (\ref{three-sph-partii}) is complete.
\end{proof}

\begin{remark}
Certain steps of the previous proof can be repeated for the general case of $n$-element sub-$p$-toral sets. They decompose as ``direct sums'' of two-dimensional $\Z_p$ orbits. All $n$-element subsets of unit spheres are naturally parameterized by the $n\times n$ positive semidefinite symmetric matrices with $1$'s on the diagonal, that is Gram matrices. Call this convex set of matrices $R^n$. Subsets of $\S^1$ then correspond to the set $\Sigma^n\subset R^n$ of positive semidefinite matrices of rank at most $2$. Again, for every prime $p$ there is a finite set $\Sigma^n_p$ corresponding to suborbits of $\Z_p$ on the circle $\S^1$, possibly degenerate. 

We observe that the sub-$p$-toral $n$-tuples correspond to $\conv \Sigma^n_p$. For $n>3$, the set $\Sigma^n$ is no more equal to ``almost the boundary'' of $R^n$ 
and even the set $\conv \Sigma^n$ may not coincide with the whole $R^n$. In fact, Theorem~\ref{suborbit3-failure} implies that for large $n$ the set
$\conv \Sigma^n$ will not coincide with $R^n$.
\end{remark}

Finally, we prove Theorem~\ref{thm_central_suborbits} about three-point suborbits of arbitrary groups.
\begin{proof}[Proof of Theorem~\ref{thm_central_suborbits}]
Say that a triple of distinct points $A,B,C\in \S^1$ has parameters $(a,b,c)$ if the linear relation 
$$
aA+bB+cC=0
$$
holds true. These parameters are defined up to rescaling. Suppose $B=gA$ and $C=hA$ where $g,h$ are elements of a finite group 
acting on an ambient $\S^N\supseteq \S^1$. Since every representation of a finite group is equivalent to the matrix representation 
with algebraic entries, it follows that $B=GA$ and $C=HA$ for some algebraic matrices $G$ and $H$. Then $(aI+bG+cH)A=0$, which 
implies that $\det(aI+bG+cH)=0$. This is a polynomial equation in $a,b,c$, evidently nondegenerate. The set 
of triples $(a,b,c)$ such that there is a triple $A,B,C\in \S^1$ with parameters $(a,b,c)$ is $3$-dimensional, 
whereas this polynomial defines a surface. Since there are only countably many possible groups of symmetry and 
their representations, it follows that almost every triple in $\S^1$ is not a suborbit of a finite group action on an ambient $\S^N\supseteq \S^1$. 
\end{proof}

\medskip
{\bf Acknowledgment.} We thank the anonymous referee for valuable remarks.

\bibliographystyle{plain}
\bibliography{suborbits}

\end{document}